\theoremstyle{thmstyleone}%
\newtheorem{theo}{Theorem}
\newtheorem{lemma}[theo]{Lemma}
\newtheorem{proposition}[theo]{Proposition}%
\theoremstyle{thmstyletwo}%
\newtheorem{remark}[theo]{Remark}
\theoremstyle{thmstylethree}%
\renewcommand{\epsilon}{\varepsilon}
\newcommand{\X}{\mathcal{X}}
\renewcommand{\w}{m}
\newcommand{\n}{n}
\newcommand{\G}{g}
\newcommand{\g}{\phi}
\newcommand{\N}{\mathbb{N}}
\renewcommand{\P}{\mathbb{P}}
\newcommand{\E}{\mathbb{E}}
\newcommand{\V}{\mathbb{V}}
\newcommand{\Ct}{M_t}
\newcommand{\St}{S_t}
\newcommand{\hXt}{\hat X_t}
\newcommand{\wCt}{\tilde M_t}
\newcommand{\mm}{MMM\xspace}
\newcommand{\sm}{FMM\xspace}
\begin{document}

\title[Branching with selection and mutation I]
{Branching with selection and mutation I:\\ Mutant fitness of Fr\'echet~type}


\author[1]{\fnm{Su-Chan} \sur{Park}}\email{spark0@catholic.ac.kr}

\author[2]{\fnm{Joachim} \sur{Krug}}\email{jkrug@uni-koeln.de}

\author[3]{\fnm{L\'eo} \sur{Touzo}}\email{leo.touzo@phys.ens.fr}
\author*[4]{\fnm{Peter} \sur{M\"orters}}\email{p.moerters@uni-koeln.de}

\affil[1]{\orgdiv{Department of Physics}, \orgname{The Catholic University of Korea}, \orgaddress{\street{43 Jibong-ro}, \city{Bucheon-si}, \postcode{14662}, \state{Gyeonggi-do}, \country{Republic of Korea}}}

\affil[2]{\orgdiv{Institut f\"ur Biologische Physik}, \orgname{Universit\"at zu K\"oln}, \orgaddress{\street{Z\"ulpicher Str. 77}, \city{50937 K\"oln}, \country{Germany}}}

\affil[3]{\orgdiv{Laboratoire de Physique de l'\'Ecole Normale Sup\'erieure}, \orgname{ENS, Université PSL, CNRS, Sorbonne Universit\'e, Universit\'e Paris Cité}, \orgaddress{\street{}\postcode{F-75005} \city{Paris}, \state{}\country{France}}}

\affil*[4]{\orgdiv{Mathematisches Institut}, \orgname{Universit\"at zu K\"oln}, \orgaddress{\street{Weyertal 86--90}, \city{50931 K\"oln},   \country{Germany}}}


\abstract{We investigate two stochastic models of a growing population subject to selection and mutation.
In our models each individual carries a fitness which determines
its mean offspring number. Many of these offspring inherit their parent's  fitness, but some are mutants and 
obtain a fitness randomly sampled from a distribution in the domain of attraction of the Fr\'echet distribution. 
We give a rigorous proof for the precise
rate of superexponential growth of these stochastic processes and support the argument by a heuristic and numerical study of the mechanism underlying this growth. }

\keywords{Branching process, random environment, population model, selection, mutation, survival of the fittest,
Fr\'echet distribution.}



\maketitle
\section{Introduction}

While the theory of branching processes is undoubtedly one of the best developed areas of probability theory, stochastic branching models
that incorporate effects of selection and mutation have only recently caught the attention of mathematicians and physicists. This is despite the
unquestionable relevance of these effects to the evolution of populations in nature and in the laboratory~\cite{Park2010,Wiser2013}. 
\medskip

By contrast, deterministic high density models of a population undergoing selection and mutation have been studied for quite some time. The model most closely associated with our stochastic process is Kingman's model~\cite{Kingman1978}. This is a dynamical system on the space of probability measures describing the fitness distribution of a population. After $t$ generations the fitness distribution $p_t$ of the population is replaced by
$$p_{t+1}(dx) = (1-\beta) \frac{ x \, p_{t}(dx)}{\int x \, p_{t}(dx)} + \beta \mu(dx).$$
Here a proportion $1-\beta$ of the new generation has been selected from the current generation proportionally to their fitness
and a proportion $\beta$ are mutants that get a new fitness, sampled independently from their past using the mutant fitness 
distribution $\mu$. Note that this model is only well-defined if the mean fitness remains finite and therefore requires moment bounds for
the mutant fitness distribution. 
Kingman's model undergoes a condensation phase transition,
which is further studied in~\cite{Dereich2013}. Variants of the model 
have been considered in~\cite{Yuan,Yuan2020,Yuan2022}. 
\medskip

The study of individual based, stochastic models is much more recent. Park and Krug~\cite{Park2008} studied Kingman's model for an unbounded 
fitness distribution alongside a random model of fixed, finite population size. 
Despite its highly simplified nature, the finite population model with an exponential distribution
of fitness effects is qualitatively consistent with the fitness increase observed in Lenski's long-term
evolution experiment with bacteria \cite{Sibani1998,Wiser2013}.
A generalization of this model that includes the response of the immune system to a population
of pathogens was considered in~\cite{Bianconi2011}.
\medskip

The first papers studying branching process models
that express the selective advantage of a fit individual in terms of its offspring distribution are~\cite{Dereich2017}, which deals
with Weibull type fitness distributions and puts the focus on the condensation phenomenon in that model, and~\cite{Mailler2021}
which looks at the growth of the fittest family in the case of Gumbel type distributions. Both papers are limited to bounded 
fitness distributions and implicitly rely on the analogy to Kingman's original model, though of course the methods of study 
are entirely different in a stochastic setting. The present paper initiates the study of branching processes with selection and mutation for
unbounded fitness distributions. We focus on the case of Fr\'echet type fitness distributions where the mathematical challenge is linked to 
the fact that the analogous Kingman model is ill-defined \cite{Park2008}.
\medskip

The structure of the paper is as follows.
In Sec.~\ref{Sec:model}, we introduce the models and state the main theorem. Sec.~\ref{Sec:sim} explains the heuristics behind
the formal results and in Sec.~\ref{Sec:pf} we present a rigorous proof of the theorem. Sec.~\ref{Sec:steady} contains refined 
results for the empirical frequency distribution for one of our models. These results are not yet accessible by a complete rigorous mathematical 
analysis, so that we resort to a numerical and heuristic study and a rigorous analysis of an approximating deterministic system. In Sec.~\ref{Sec:sum} we provide a short discussion that places our results into the context
of previous work and points to directions for future research.
\pagebreak[3]

\section{\label{Sec:model}Models and main result} 
We study two models of a population evolving in discrete generations. In both models all individuals are assigned a fitness value, which is 
a positive real number. As model parameters we fix a probability distribution $\mu$ on $(0,\infty)$ from which the random fitness values $F$ are sampled, 
and a mutation probability~$\beta\in(0,1)$.%
\smallskip

In both models we start from generation $t=0$ with a 
single individual\footnote{The generalization to multiple individuals is straightforward.}
with fitness $f$.
Each individual in the population in generation $t\geq 0$ produces a Poisson random number of 
offspring with mean given by its fitness. With probability $1-\beta$ an offspring individual 
inherits its parent's fitness and is added to the population at generation $t+1$.
Otherwise, with probability $\beta$, it is a mutant. The two models differ in the fate of the mutants.
\begin{itemize}
\item \emph{Fittest mutant model (\sm):} Every mutant is assigned a fitness sampled independently from~$\mu$. 
Only the fittest mutant (if there is one) is added to the population at generation $t+1$. All other mutants die instantly.

\item \emph{Multiple mutant model (\mm):} Every mutant is assigned a fitness sampled independently from~$\mu$ and is added to the population at generation $t+1$.
\end{itemize}

We write $X(t)$ for the number of individuals in generation $t$ and study the growth of the populations conditioned
on the event of survival, i.e. when $X(t) \neq 0$ for all times~$t$. 
It is easy to see that the population size of the
\mm dominates the population size of the \sm at all times. Because the growth is determined by the fittest 
mutants we expect both models to grow at the same rate and to show this, it suffices to find an upper bound for the \mm
and a matching lower bound for the \sm.
\medskip

Naturally, the rate of growth depends on the  mutant fitness distribution~$\mu$. If $\mu$ is an unbounded distribution 
in both models individuals of ever increasing fitness occur and hence the  population will grow 
superexponentially fast. By contrast, if $\mu$ is bounded we can only have exponential growth.  Indeed, if 
$\mu$ is continuous with essential supremum one,
then for a closely related continuous time model of immortal individuals, 
it is shown in~\cite[Remark~1]{Dereich2017}  that
$$\lim_{t\to\infty} \frac{\log X(t)}{t}= \lambda^*,$$
where $\lambda^*\in[1-\beta,1)$ is the unique solution of the equation 
$$\beta \int \frac{\lambda^*}{\lambda^*-(1-\beta)x} \, \mu(dx) = 1$$
if $\beta \int \frac{1}{1-x} \, \mu(dx)\geq 1$, and otherwise $\lambda^*:=1-\beta$. 
Further details on the long term growth of the process depend on the classification
of $\mu$ according to its membership in the max domain of attraction of an
extremal distribution. By the celebrated Fisher-Tippett theorem
there are three such universality classes, see for example~\cite[Proposition~0.3]{Resnick}. These are 
\begin{itemize}
    \item the \emph{Weibull class}, which roughly occurs if $\mu$ is bounded with mass decaying slowly 
    near the essential supremum, 
    \item the \emph{Gumbel class}, which roughly occurs if the mass of $\mu$ is decaying quickly near the essential supremum, 
    which may be finite or infinite,
    \item the \emph{Fr\'echet  class}, which roughly occurs if $\mu$ is unbounded with mass decaying slowly 
    near infinity.
\end{itemize}
The assignment of mutant fitness distributions to extreme value universality classes plays an important role in the interpretation of evolution experiments~\cite{Joyce2008}, and
representatives of all three classes have been identified empirically~\cite{Das2022}.
\medskip

In the present paper, we are mainly interested in the asymptotic behaviour of the population size $X(t)$
in the so far unexplored case that $\mu$ belongs to the 
Fr\'echet class (or, in short, is of \emph{Fr\'echet type}). 
Precisely, this means that the tail function
$$G(x):=\mu((x,\infty))=\mathbb P(F> x)$$
is regularly varying with index $-\alpha$ for some $\alpha>0$. In other words, there exists a function 
$\ell\colon (0,\infty) \to \mathbb R$ which is slowly varying  at infinity such that $G(x)=x^{-\alpha} \ell(x)$.
As in this case $\mu$ is an unbounded distribution, the process $(X(t) \colon t\geq 0)$  will grow 
superexponentially fast on survival and therefore our discussion will focus on the 
limiting quantity 
\begin{align}
\nu = \lim_{t\rightarrow \infty} \frac{\log \log X(t)}{t}.
\label{Eq:defnu}
\end{align}
Our main result is stated in the following theorem.
\begin{theo}\label{Th:mainthm}
Given $\alpha>0$, let $T\in\mathbb N$ be the unique number such that
$$\frac{(T-1)^T}{T^{T-1}} <\alpha \leq \frac{T^{T+1}}{(T+1)^{T}}$$
and define
\begin{align}
\label{Eq:finala}
\nu(\alpha) := \frac1T \log \frac{T}\alpha.
\end{align}
Let $(X(t))_{t\ge0}$ be the size of the population in either the \sm or the \mm.
Then, almost surely on survival,
$$\lim_{t\to\infty} \frac{\log\log X(t)}{t}=\nu(\alpha).$$
\end{theo}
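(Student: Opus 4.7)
My plan is to prove the theorem by matching an upper bound, for which the stochastic domination $X^{\mathrm{FMM}}(t)\le X^{\mathrm{MMM}}(t)$ reduces the task to the \mm, with a lower bound that correspondingly reduces to the \sm. The value $\nu(\alpha)$ emerges from the following self-consistency heuristic. By extreme value theory for the Fr\'echet class, the maximum of $N$ i.i.d.\ samples from $\mu$ is of order $N^{1/\alpha}$ up to slowly varying corrections. Since the total number of mutants ever produced by time $s$ is dominated by $X(s)$ in both models, the log-fitness of the fittest record born by time $s$ is essentially $\log X(s)/\alpha$. A lineage founded at time $s$ with fitness $F_k$ has size of order $\exp((t-s)\log F_k)$, so the dominant lineage at time $t$ maximises $(t-s)\log X(s)/\alpha$ over past generations $s$. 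Plugging in the ansatz $\log X(t)\sim \exp(\nu t)$ and maximising over integer spacings $T=t-s$ yields the self-consistency equation $\nu T = \log(T/\alpha)$; the optimal $T\in\mathbb N$ is precisely the integer singled out in the theorem, and the bounds $\frac{(T-1)^T}{T^{T-1}}<\alpha\le\frac{T^{T+1}}{(T+1)^T}$ are exactly the conditions $\frac{1}{T}\log(T/\alpha)\ge \frac{1}{T\pm1}\log((T\pm1)/\alpha)$.

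For the upper bound on the \mm, I would combine three ingredients: (i) Poisson concentration for the offspring, giving $\log X(t+1)\le \log(\sum_i F_i)+O(1)$ with high probability, with the sum over individuals at generation $t$; (ii) a lineage decomposition in which each lineage founded at $(s_k,F_k)$ is a supercritical Galton--Watson process of mean $(1-\beta)F_k$, hence has size at most $((1-\beta)F_k)^{t-s_k}$ up to a slowly varying correction; (iii) extreme value control giving $F_k\le X(s_k)^{1/\alpha + o(1)}$ w.h.p.\ because the cumulative mutant count up to $s_k$ is dominated by $X(s_k)$. Combining these yields a recursion of the form
\begin{equation*}
\log X(t) \le \max_{1\le T\le t}\tfrac{T}{\alpha}\log X(t-T)+o(\log X(t)),
\end{equation*}
and an induction on $t$ shows that the ansatz $\log X(t)\le C\exp(\nu t)$ is sustainable whenever $\exp(\nu T)\ge T/\alpha$ for every $T\in\mathbb N$, i.e.\ whenever $\nu\ge\nu(\alpha)$. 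All ``w.h.p.'' statements are upgraded to ``almost surely'' via Borel--Cantelli, using that the super-exponential growth of $X(t)$ makes the failure probabilities at each generation summable.

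For the lower bound on the \sm, I would construct an explicit record-replacement scheme with spacing equal to the optimal integer $T$. Starting from generation $0$, consider the unique mutant retained by the \sm at each of the times $t_k=kT$ and show that, with probability bounded away from zero, its fitness $F_k$ satisfies $\log F_k\ge (T/\alpha)\log F_{k-1}-O(\log\log F_{k-1})$. Indeed, by a Galton--Watson limit theorem the preceding record lineage has size at least $((1-\beta)F_{k-1})^{T-1}$ at generation $t_k-1$, the mutant count in generation $t_k$ is then Poisson with mean of order $F_{k-1}^T$, and the maximum of that many i.i.d.\ Fr\'echet samples exceeds $F_{k-1}^{T/\alpha - o(1)}$ with high probability. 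Iterating, $\log F_k\ge (T/\alpha)^k\log F_0\cdot(1+o(1))$ almost surely on the joint success event, so $\log X((k+1)T)\ge T\log F_k$, giving $\liminf \log\log X(t)/t \ge \nu(\alpha)$. Borel--Cantelli on the complementary failure events, together with the fact that the survival probability of each newly created supercritical lineage tends to $1$, reduces the almost-sure statement on $\{X(t)\neq 0\ \forall t\}$ to summability of tail probabilities.

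The main obstacle will be the quantitative control of the crossover between successive dominant lineages. In the upper bound, the lineage decomposition potentially involves up to $O(X(t))$ summands, and one must ensure that the slowly varying tail $\ell$ and the subdominant contributions accumulating over generations do not corrupt the constant $\nu(\alpha)$; this requires a judicious choice of slowly-varying envelope and of the exceptional events discarded. In the lower bound, one has to rule out the degenerate scenarios in which the previously dominant lineage fluctuates downwards or dies precisely when the next record should materialise, which in turn demands concentration estimates for supercritical Galton--Watson sizes and for maxima of Fr\'echet samples that are uniform in the generation. Both issues should yield to generation-by-generation high-probability estimates combined with Borel--Cantelli, but verifying summability of the bad-event probabilities is where the bulk of the technical work lies.
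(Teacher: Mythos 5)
Your high-level heuristic (self-consistency $\nu T=\log(T/\alpha)$, integer optimisation over the lag $T$, $\mathrm{FMM}\le\mathrm{MMM}$ to split upper and lower bounds) is exactly the one that drives the paper, and your lower-bound scheme --- track the record mutant on a spacing-$T$ grid, use a Galton--Watson lower bound for the record lineage and a Poisson maximum estimate to beget the next record --- is a legitimate variant of the paper's construction (which instead grows a copy of the recursion $\chi_t'$ with all intermediate generations tracked, and re-roots the argument at a very fit mutant using the fact that $(W_t)$ is a.s.\ unbounded on survival). Two caveats there: the error per step should be budgeted as $O(\epsilon\log F_{k-1})$ rather than $O(\log\log F_{k-1})$, since the slowly varying factor $\ell$ and the GW fluctuations cost a small \emph{power} of $F_{k-1}$; working with $\alpha'=\alpha/(1-2\epsilon)$ and sending $\epsilon\to0$ at the end, as the paper does, absorbs this cleanly. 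Also, you need the explicit re-rooting step ($\lim_{f\to\infty}\mathbb P(E(f))=1$ plus unboundedness of records on survival) to upgrade to an almost-sure statement on $\{X(t)\ne0\ \forall t\}$; your Borel--Cantelli gloss conflates this with within-run summability.

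The genuine gap is in the upper bound. The recursion you assert, $\log X(t)\le\max_{1\le T\le t}\frac{T}{\alpha}\log X(t-T)+o(\log X(t))$, does \emph{not} follow from your ingredients (i)--(iii) as stated: the lineage decomposition has $\Theta\bigl(\sum_{s<t}X(s)\bigr)=\Theta(X(t))$ summands, so bounding the sum by the largest term times the number of terms already costs a $\log X(t)$ error, which is not $o(\log X(t))$. You correctly flag this as ``the main obstacle'', but the resolution is precisely the paper's central device and cannot be waved away: partition the fitness axis into a \emph{fixed, finite} number $\hat n+2$ of geometric windows $(h_{n-1,t},h_{n,t}]$, use Markov/Chebyshev for the Poisson number of mutants landing in each window at each generation (the Fr\'echet tail makes the count in a high window small, compensating for its fitness), and then bound the non-mutant descendants of each window by a GW upper bound. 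The aggregate over windows and generations then carries only a combinatorial prefactor $\theta_{t+1}\sim 2^t$, whose logarithm is $o(e^{\nu t})$. Your proposal identifies the destination but not this binning mechanism, and without it the $o(\log X(t))$ error claim has no proof. A closely related omission: to make the induction start, the paper proves the bound for a specially prepared rich initial condition dominating a single individual of fitness $f$, not for an arbitrary start; this coupling is also part of what keeps the error controlled.
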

Before presenting the proof of the theorem in Sec.~\ref{Sec:pf}, in the next section we motivate the expression (\ref{Eq:finala}).  

\section{\label{Sec:sim}Motivation of the main result}
Here we explain the statement of Theorem~\ref{Th:mainthm}
by a heuristic analysis of the \sm. 
For convenience we take the fitness distribution to be of Pareto form, $G(x) = x^{-\alpha}$ for $x \ge 1$ and $G(x) = 1$ for $x< 1$. Moreover, throughout this section we assume 
that the initial fitness $f$ is so large that the fluctuations induced by Poisson sampling are negligible at all times, which implies that both the total population size and the
sizes of subpopulations of mutants are well approximated by their expectations.
Denoting the fitness of the mutant that is added to the population at generation $t$
by $W_t$, 
we can then write 
\begin{align}
X(t) \approx (1-\beta)^t f^t + \sum_{i=1}^t (1-\beta)^{t-i} W_i^{t-i},
\label{Eq:evol}
\end{align}
where the factors $1-\beta$ account for the fact that (apart from the added mutant) only the unmutated fraction of the population survives to the next generation.
For the same reason the total number $N_t$ of mutants produced in generation $t$ (including the ones that die immediately) is approximately 
$$
N_t \approx \frac{\beta}{1-\beta} X(t).
$$
Since the probability that the largest fitness $W_t$ among $N_t$ independent and identically distributed 
random variables with common distribution $G$ is smaller than $x$  is $(1-x^{-\alpha})^{N_t}$,
the random variable $W_t$ can be sampled as
$$
W_t = \left ( 1 - Z_t^{1/N_t} \right )^{-1/\alpha}
\approx X(t)^{1/\alpha} Y_t,
\quad Y_t :=  \left (\frac{1-\beta}{\beta} \log \frac1{Z_t} \right )^{-1/\alpha},
$$
where $Z_t$ is uniformly distributed in the interval $(0,1)$
and we have approximated \smash{$Z_t^{1/N_t} \approx 1 + (1/N_t)\log Z_t$}.
Note that $Y_t$ does not depend on $X(t)$.\medskip

To proceed, 
we define $\omega_t$ as
\begin{align*}
\omega_t := \frac{\log X(t)}{\log f},
\end{align*}
which implies that $X(t) = f^{\omega_t}$ and $W_t \approx Y_t f^{\omega_t/\alpha}$. 
Inserting these relations into (\ref{Eq:evol}) we obtain
\begin{align}
f^{\omega_t} \approx (1-\beta)^t f^t + \sum_{i=1}^t (1-\beta)^{t-i} Y_{i}^{t-i}
f^{(t-i)\omega_{i}/\alpha}.
\end{align}
In the limit $f \to \infty$ the sum on the right hand side is dominated by the term with the largest exponent. Correspondingly, the $\omega_t$ can be well approximated by the solution $\chi_t$ of the recursion relation
\begin{align}
\chi_t = \max\left \{t,\frac{t-1}{\alpha}\chi_1, \frac{t-2}{\alpha}\chi_2,\ldots,\frac{t-k}{\alpha}\chi_k,
\ldots, \frac{1}{\alpha}\chi_{t-1}\right \}
\label{Eq:chire}
\end{align}
with $\chi_1=1$. 
We now argue that the $\chi_t$ grow at least exponentially.
Since for any $t_0 \ge 1$ and any positive integer $m$
$$
\chi_{t_0+m} \ge \frac{m}{\alpha} \chi_{t_0},
$$
we have, for any $n \geq 1$
$$
\chi_{t_0+nm} \ge \left ( \frac{m}{\alpha} \right )^n \chi_{t_0}.
$$
Correspondingly
\begin{align}
\lim_{t \rightarrow \infty} \frac{\log \chi_{t}}{t} 
=\lim_{n \rightarrow \infty} \frac{\log \chi_{t_0+nm}}{nm} 
\ge \frac{1}{m} \log \frac{m}{\alpha},
\label{Eq:numm}
\end{align}
where we have assumed that the limit is well-defined.
Since (\ref{Eq:numm}) is valid for any integer $m \ge 1$, an optimal
lower-bound can be found by maximizing the right hand side. As shown by Lemma~\ref{lem:nu} in Sec.~\ref{Sec:pf}, the maximizer over the positive integers is precisely the function $\nu(\alpha)$ in Theorem~\ref{Th:mainthm}. As the population
size depends exponentially on $\omega_t$ or $\chi_t$, the heuristic argument makes it plausible that $\nu(\alpha)$ is a lower bound
on the double-exponential growth rate of $X(t)$. Remarkably, Theorem~\ref{Th:mainthm}
states that the bound is tight, and moreover applies also to the \mm. Informally this implies that the population at time $t$ is dominated by the fittest mutant that was generated at time $t-T$. As a consequence the mutant frequency distribution changes periodically with period $T$ (see Sec.~\ref{Sec:steady} for further discussion). \medskip

In Fig.~\ref{Fig:makea}, we depict $\nu(\alpha)$ together with the numerical
solution\footnote{The direct numerical estimate of $\nu$ by extrapolating $\log \chi_t/t$ is hampered by the fact that
$\lim_{t\rightarrow\infty}\chi_t e^{-\nu t}$ does not exist in general (see Sec.~\ref{Sec:steady}). The method used to obtain the data in Fig.~\ref{Fig:makea}
is explained at the end of Sec.~\ref{Sec:numerical_recursion}.} of the recursion relation~\eqref{Eq:chire}. 
The fact that $\nu(\alpha)$ is the exact exponential growth rate of $\chi_t$ is proven rigorously in Lemma~\ref{ideal_conv} in 
Sec.~\ref{Sec:pf}. In the inset of Fig.~\ref{Fig:makea}, we compare \eqref{Eq:finala}
to an approximation obtained by treating $m$ in \eqref{Eq:numm} as a continuous variable. This yields  
\begin{align}
\max\left \{\frac{1}{x} \log \frac{x}{\alpha} \colon x \ge 1 \right \}
= \begin{cases}
1/(e \alpha), & \alpha e \ge 1,\\
-\log \alpha , & \alpha e < 1.
\end{cases}
\label{Eq:naive}
\end{align}
Although \eqref{Eq:naive} is not exact, the relative error is less than 7\% in all cases. 
\medskip

For $\alpha e < 1$ the expressions (\ref{Eq:finala}) and (\ref{Eq:naive}) actually coincide. In this regime of extremely heavy-tailed fitness distributions (more precisely, in the case of $\alpha \le 0.5$) selection becomes irrelevant, in the sense that the double-exponential growth rate $\nu(\alpha) = \log(1/\alpha)$ persists in the extreme case \mbox{$\beta \to 1$} of the \mm,
where all individuals are replaced by mutants in each generation and the process becomes a classical Galton-Watson process albeit with infinite mean, cf.~\cite{Davies}. 
In the case of the \sm, the extreme case would stop the population from growing but the fitness $W_t$ of the single individual
present approximately satisfies $W_{t+1} \approx W_t^{1/\alpha}$, which gives
$$
\lim_{t\rightarrow \infty} \frac{\log \log W_{t}}{t} = \log(1/\alpha).
$$
\begin{figure}
\centering
\includegraphics[width=\linewidth]{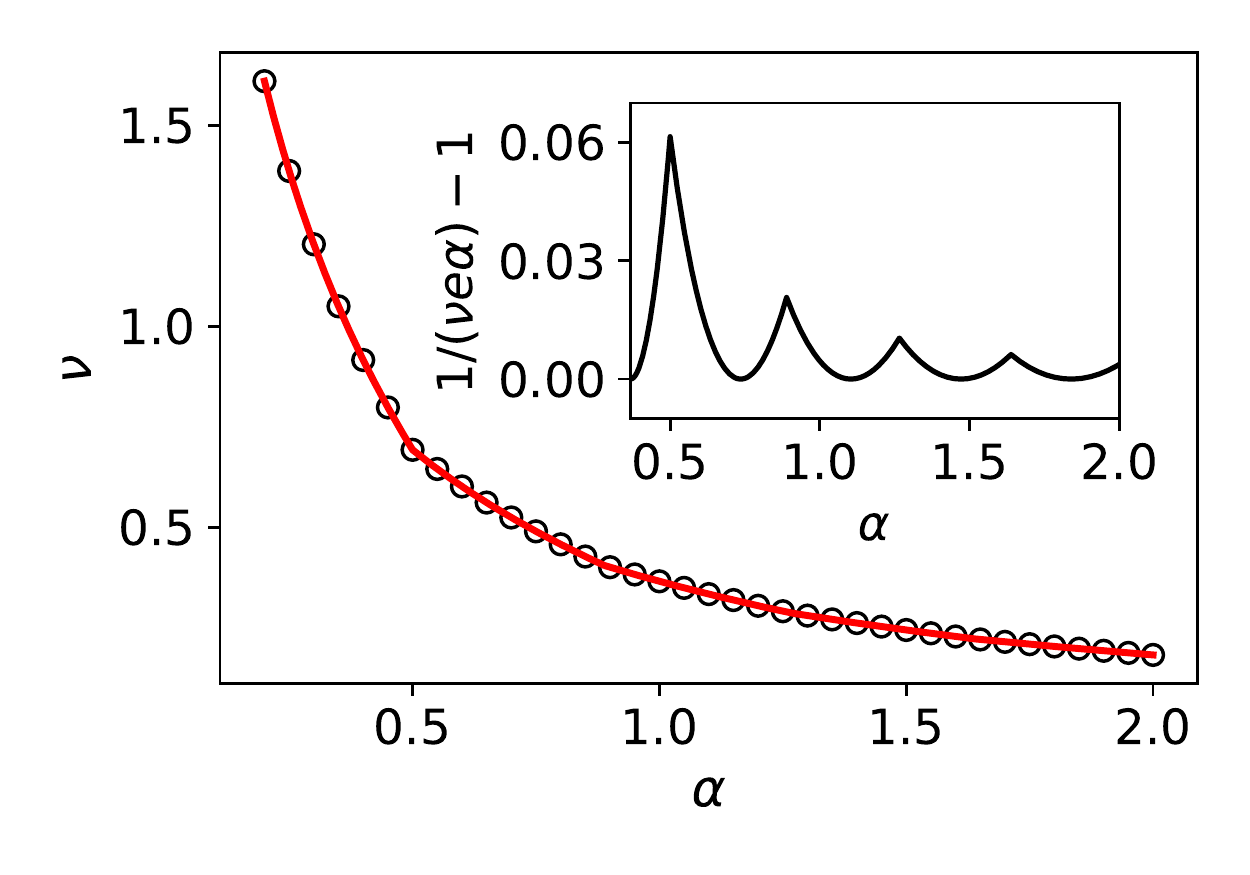}
\caption{\label{Fig:makea} Plots of $\nu $ vs $\alpha$.
Solid line depicts \eqref{Eq:finala} and symbols are from 
numerical solution
of the recursion relation~\eqref{Eq:chire}. Inset: Plot of 
$(\nu e \alpha)^{-1}-1$ vs. $\alpha$ with $\nu $ in \eqref{Eq:finala}.
The error of the approximation \eqref{Eq:naive} is small and
vanishes when $\alpha \leq 1/e$.}
\end{figure}


\section{\label{Sec:pf}Proof of Theorem~\ref{Th:mainthm}}
\subsection{Preparation for the proof}

In this subsection we collect some tools that will be used in the proofs of the lower and upper bounds
in the estimate leading to Theorem~\ref{Th:mainthm}. The lower bound will be verified in Section~\ref{low}
and the upper bound in Section~\ref{up}.
\medskip

For $t\in\N_0$ let $W_t$  be the fitness of the fittest of the mutants in generation~$t$
and $W_t=0$ if there are no mutants in generation~$t$. Our first observation is that
under the weak assumption $G(x) >0$ for all large~$x$ (which always holds if $\mu$ 
is of Fr\'echet type) either the sequence $(W_t)$ is unbounded or the branching 
process dies out in finite time. Heuristically speaking, on survival the accumulated number of mutants is unbounded almost surely, which naturally entails unbounded largest fitness.\\[-0.7cm]

\begin{lemma}
Almost surely on survival the sequence $(W_t)$ is unbounded.\label{Th:Wun}
\end{lemma}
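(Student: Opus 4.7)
The plan is to reduce the lemma to two self-contained facts: (i) on the survival event $S:=\{X(t)>0\text{ for all }t\}$ the process almost surely produces infinitely many mutants in total; and (ii) on the event $\{\tau=\infty\}$ that infinitely many mutants are produced, the sequence $(W_t)$ is unbounded almost surely. Here $\tau$ denotes the total number of mutants ever produced. Granted (i) and (ii) the lemma follows from the decomposition $S=(S\cap\{\tau=\infty\})\cup(S\cap\{\tau<\infty\})$.

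Fact (ii) is the easier step. One may realise the mutant fitnesses as an iid sequence $F_1,F_2,\dots$ drawn from $\mu$, assigned to the successive mutants in order of birth. Since $\mu$ is of Fr\'echet type, $G(M)>0$ for every $M>0$, and hence $\P(F_i\le M\text{ for every }i)=0$. On $\{\tau=\infty\}$ every $F_i$ is actually used, so $\sup_t W_t\ge \sup_i F_i=\infty$ almost surely.

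The substance of the proof lies in (i). Fix $t_0\in\N_0$ and let $E_{t_0}$ be the event that no mutant is produced in any generation $t>t_0$, so that $\{\tau<\infty\}=\bigcup_{t_0}E_{t_0}$ and it suffices to show $\P(E_{t_0}\cap S)=0$ for each $t_0$. Conditioning on the configuration at time~$t_0$, the branching property makes the sub-processes of descendants of the individuals alive at $t_0$ independent copies of the full process started from a single ancestor of the corresponding fitness. A union bound over these sub-processes then reduces the problem to proving $q(f)=0$ for every $f>0$, where $q(f)$ is the probability that the sub-process started from a single ancestor of fitness~$f$ never produces a mutant and survives.

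To verify $q(f)=0$, introduce the non-mutant subtree $\widetilde{\mathcal T}$ of such an ancestor, i.e., those of its descendants whose entire chain of ancestors back to the root is non-mutant. By Poisson thinning $\widetilde{\mathcal T}$ is a Galton--Watson tree with offspring distribution $\mathrm{Poisson}((1-\beta)f)$. On the event that no mutant ever appears in the sub-process, the sub-process coincides with $\widetilde{\mathcal T}$, so its survival becomes $\{|\widetilde{\mathcal T}|=\infty\}$; and by the independent Poisson splitting of offspring into mutant and non-mutant types at each node, the conditional probability that no node of $\widetilde{\mathcal T}$ produces a mutant child equals $\exp(-\beta f|\widetilde{\mathcal T}|)$ given $\widetilde{\mathcal T}$. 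Therefore $q(f)=\E[\exp(-\beta f|\widetilde{\mathcal T}|)\,\mathbf 1_{\{|\widetilde{\mathcal T}|=\infty\}}]=0$, since the integrand vanishes on the survival set of $\widetilde{\mathcal T}$. The main technical obstacle I anticipate is exactly this independence bookkeeping: because the offspring law of a mutant descendant depends on its random inherited fitness, the full descendant process is not itself a single Galton--Watson tree and a naive tail-product argument will not apply. Restricting attention to the non-mutant subtree, whose offspring law is a fixed Poisson independent of all mutant fitness draws, is what makes the decomposition clean.
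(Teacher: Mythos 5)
Your proof is correct but takes a genuinely different route from the paper. The paper constructs a coupling with a single i.i.d.\ Bernoulli($\beta$) sequence $(\xi_i)$ and an i.i.d.\ fitness sequence $(F_i)$ such that, on survival up to time $t$, the designated first offspring in each generation~$i$ is a mutant precisely when $\xi_i=1$, with fitness $F_i$; this gives $W_i\ge F_i\xi_i$ outright. From this the authors bound $\P(W_i\le x \text{ for all }i\le t)$ by $(\beta\P(F\le x)+1-\beta)^t+\P(\text{extinction})$ and let $t\to\infty$, so the whole lemma reduces to a one-line geometric decay estimate. Your argument instead splits the claim into the two logically independent pieces ``infinitely many mutants are born on survival'' and ``the supremum of infinitely many i.i.d.\ Fr\'echet-type draws is infinite,'' and establishes the first by the non-mutant Galton--Watson subtree decomposition: by Poisson thinning the non-mutant offspring counts are $\mathrm{Poisson}((1-\beta)f)$ independently of the $\mathrm{Poisson}(\beta f)$ mutant counts at each node, so $q(f)=\E\bigl[e^{-\beta f|\widetilde{\mathcal T}|}\mathbf 1_{\{|\widetilde{\mathcal T}|=\infty\}}\bigr]=0$. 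This is sound (your union bound over the at-time-$t_0$ ancestors is fine since the population at a fixed time is a.s.\ finite, and once one restricts to the event $E_{t_0}$ the two models coincide with a disjoint union of independent non-mutant Galton--Watson trees, so the FMM's global culling of mutants causes no dependence problem). The trade-off: the paper's coupling is shorter and extracts exactly the minimal fact needed (each surviving generation independently carries a mutation with probability $\beta$), whereas your route is longer but isolates a reusable structural statement, namely that on survival the cumulative number of mutations diverges, which the paper alludes to only as a heuristic before its proof. One small remark: for step (ii) you invoke the Fr\'echet assumption to get $G(M)>0$ for all $M$, but as the paper notes the weaker hypothesis $G(x)>0$ for all large $x$ already suffices, and indeed your argument uses only that.
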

\ \\[-1.2cm]

\begin{proof}
We first show that the branching process can be coupled to a sequence $(\xi_1, \ldots, \xi_t)$
of independent Bernoulli variables with success parameter $\beta$ and an independent 
sequence $(F_1,\ldots, F_t)$ of independent fitnesses with distribution~$\mu$
such that \emph{on survival up 
to generation $t$} we have, for all $1\leq i \leq t$,
\begin{itemize}
\item $\xi_i=1$ if there is at least one mutant in generation $i$, and 
\item $W_i \geq F_i \xi_i$.
\end{itemize}
Indeed, once the random variables $(F_1,\ldots, F_t)$  and $(\xi_1, \ldots, \xi_t)$ are generated with the given law we generate the branching process as follows: Produce the offspring in the $n$th generation 
as a Poisson distribution with the right parameter given by the previous generation
(possibly zero). If there is at least one offspring use $\xi_n$ to decide whether it is a mutant and if so give it fitness $F_n$. Then use other newly sampled Bernoulli variables with parameter $\beta$ and fitnesses to decide whether other variables are mutants and if they are decide their fitness. Then surival implies 
$W_n\geq \xi_n F_n$ as required. 
\smallskip

Now $N:=\sum_{i=1}^t \xi_i$ is binomially distributed with parameters $\beta>0$ 
and $t\in\N$.  We infer that, for any fixed $x>1$,
\begin{align*}
\P(W_i \leq  x~\text{ for all } i\le t) 
& \le \P(F_i \xi_i \leq  x~\text{ for all } i\le t) + \P( \text{extinction}) \\
& = \sum_{i=0}^t {t \choose i } \beta^{i} (1-\beta)^{t-i} \P(F \leq x)^{i}+ \P( \text{extinction})\\
& = (\beta\P(F \leq x) +(1-\beta))^t + \P( \text{extinction}).
\end{align*}
Since $\P(F \leq  x) < 1$ and $\beta>0$, we get
\begin{align*}
\P(W_i \leq & \, x~\text{ for all } i)  
 = \lim_{t\uparrow\infty} \P(W_i \leq x~\text{ for all } i\le t)  \\
& \le \lim_{t\uparrow\infty} (\beta\P(F \leq x) +(1-\beta))^t + \P( \text{extinction})
= \P( \text{extinction}),
\end{align*}
hence $\P((W_t)$ is unbounded$) = \P( \text{survival})$ as claimed.
\end{proof}
\medskip

We next describe the distribution of $W_t$ given the process at time $t-1$.

\begin{lemma}
\label{Th:Wa}
Suppose that at generation $t-1$ there are $n$ individuals with
fitness $F_1$, $F_2$, \ldots, $F_n$ and set $\X := \sum_{i=1}^n F_i$. Then,
for all~$x \geq 0$,
\begin{align*}
\P(W_{t}>x )
=1- e^{-\beta \X G(x) }.
\end{align*}
\end{lemma}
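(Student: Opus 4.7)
The plan is a direct computation exploiting the Poisson nature of offspring production and repeated thinning. Conditionally on the configuration at generation $t-1$, the offspring counts across different individuals are mutually independent, so I can compute the fitness-exceedance probability individual by individual and then combine.

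First, I would apply Poisson thinning. Individual $i$ produces a Poisson$(F_i)$ number of offspring, and each offspring is independently a mutant with probability $\beta$. Thinning a Poisson point process by an independent Bernoulli yields a Poisson process with thinned intensity, so the number of mutant offspring of individual $i$ is Poisson with mean $\beta F_i$, independently across~$i$. Next, each such mutant is independently assigned a fitness with distribution $\mu$, so the event $\{\text{fitness}>x\}$ has probability $G(x)$. Another round of Poisson thinning gives that the number $K_i$ of mutants of individual $i$ with fitness exceeding $x$ is Poisson with mean $\beta F_i G(x)$, and the $K_i$ are independent.

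Now I would use that a sum of independent Poisson variables is Poisson with the summed mean. Hence the total number
\[
K = \sum_{i=1}^n K_i
\]
of mutants in generation $t$ with fitness exceeding $x$ is Poisson with mean
\[
\sum_{i=1}^n \beta F_i G(x) = \beta \X G(x).
\]
Observe finally that this statement concerns all mutants produced, irrespective of whether they survive selection; in particular it applies in both the \sm and the \mm because in either case $W_t$ is defined as the maximum fitness over the whole pool of mutants. Since $\{W_t>x\} = \{K\ge 1\}$ (with the convention $W_t=0$ when there are no mutants), we conclude
\[
\P(W_t>x) = 1 - \P(K=0) = 1 - e^{-\beta \X G(x)},
\]
as claimed. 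There is no real obstacle here; the only point that needs slight care is confirming that the $K_i$'s are genuinely independent Poissons, which follows because the original per-individual offspring counts are independent Poissons and thinning is applied with independent random marks.
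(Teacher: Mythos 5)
Your proof is correct, and it takes a slightly different and arguably cleaner route than the paper's. The paper starts from the fact that the total number of mutants $N$ in generation $t$ is Poisson with mean $\beta\X$, writes $\P(W_t>x\mid N=n)=1-\bar G(x)^n$ for the max of $n$ iid fitnesses (with $\bar G = 1-G$), and then sums the resulting series over the Poisson weights to recover $1-e^{-\beta\X G(x)}$. You instead apply Poisson thinning twice: first to each individual's Poisson$(F_i)$ offspring to get a Poisson$(\beta F_i)$ count of mutants, and then to those mutants by the Bernoulli$(G(x))$ mark ``fitness $>x$'' to get a Poisson$(\beta F_i G(x))$ exceedance count; summing the independent Poissons across $i$ gives that the total exceedance count $K$ is Poisson$(\beta\X G(x))$, so $\P(W_t>x)=\P(K\ge 1)=1-e^{-\beta\X G(x)}$. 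The two arguments rest on the same Poisson structure; yours avoids the explicit series summation and makes the derivation of the Poisson mutant count (which the paper takes as given) explicit, while the paper's is more of a direct conditional-probability computation. Your remark that the lemma applies to all mutants produced (before any selection) and hence to both the \sm and the \mm is a correct and worthwhile observation.
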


\begin{proof}
First fix a positive integer $n$ and suppose $W_t^{_{(n)}}$ is the largest of $n$ independently
sampled fitnesses and  $W^{_{(0)}}_t=0$. Let $\bar G(x)=1-G(x)$ 
and note that
$$\P(W^{_{(n)}}_{t}>x) = 1- \P(W^{_{(n)}}_{t} \leq  x) = 1- \bar G(x)^n.$$
Now let $N$ be the number of
mutants in generation~$t$, which is Poisson distributed with
mean $\beta \X$. Hence, for $x \geq 0$,
\begin{align*}
\P(W_t>x )
& = \sum_{n=0}^\infty \P(W_t^{_{(n)}}>x)\P(N=n)
 = \sum_{n=0}^\infty (1-\bar G(x)^n)  \P(N=n) \\ & = 
1- \sum_{n=0}^\infty \bar G(x)^n
\frac{(\beta \X)^{n}}{n!} e^{-\beta \X}
= 1 -  e^{ -\beta \X(1-\bar G(x))}.
\end{align*}
As $1-\bar G(x)=G(x)$ the proof is complete
\end{proof}

The next two results concern the potential limit~$\nu(\alpha)$. We first characterise $\nu(\alpha)$ 
as a maximum and then as the growth rate in a recursion relation. Note that the first result easily implies that $\nu(\alpha)$
is decreasing, as well as continuous and positive.

\begin{lemma}
\label{lem:nu}
We have
$$\nu(\alpha)=\max\{\tfrac1m \log(m/\alpha) \colon m\in\mathbb N \}.$$
In particular, for all $m \in \N$, we have
\begin{align}
 m \le \alpha e^{ \nu(\alpha) m} \quad \text{ and } \quad
T = \alpha e^{\nu(\alpha) T}.
\label{Eq:numax}
\end{align}
\end{lemma}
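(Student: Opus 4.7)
The plan is to reduce the identity to a discrete monotonicity analysis of the auxiliary sequence
$h(m) := \frac{1}{m}\log(m/\alpha)$, $m \in \N$.
Since $\nu(\alpha)=h(T)$ by the definition in~\eqref{Eq:finala}, proving the displayed formula amounts to showing that $T$ is a maximizer of $h$ over $\N$.

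The central step is the comparison of consecutive values. Unfolding the definitions, $h(m+1)\ge h(m)$ is equivalent to $m\log(m+1)-(m+1)\log m\ge -\log\alpha$, which is the explicit threshold
$$\alpha \ge a_m := \frac{m^{m+1}}{(m+1)^m}.$$
I would then show that $(a_m)$ is strictly increasing by raising $a_{m+1}>a_m$ to the $(m+1)$-th power and reducing it to the trivial bound $(m+1)^2>m(m+2)$. Combining this with the defining inequalities $a_{T-1}<\alpha\le a_T$ of~$T$ (note that these are exactly the thresholds appearing in the statement of Theorem~\ref{Th:mainthm}), one obtains $\alpha>a_m$ for every $m<T$ and $\alpha\le a_m$ for every $m\ge T$. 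Hence $h$ is strictly increasing on $\{1,\dots,T\}$ and non-increasing on $\{T,T+1,\dots\}$, so that $T$ is a maximizer and $\nu(\alpha)=\max_{m\in\N}h(m)$.

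The two assertions in~\eqref{Eq:numax} then follow by rearrangement: the identity $T=\alpha e^{\nu(\alpha) T}$ is a rewriting of $\nu(\alpha)T=\log(T/\alpha)$, and the bound $m\le\alpha e^{\nu(\alpha) m}$ for arbitrary $m\in\N$ is equivalent to the inequality $h(m)\le\nu(\alpha)$ just proved. There is no real obstacle in this lemma; the only content worth emphasising is that the somewhat opaque-looking thresholds $(T-1)^T/T^{T-1}$ and $T^{T+1}/(T+1)^T$ in the theorem are precisely the consecutive values $a_{T-1}$ and $a_T$ of the threshold sequence dictated by the discrete one-sided derivative of $h$.
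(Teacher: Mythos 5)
Your proposal is correct and follows essentially the same route as the paper: comparing consecutive values of $h(m)=\tfrac1m\log(m/\alpha)$, observing that $h(m+1)\ge h(m)$ is equivalent to $\alpha\ge m^{m+1}/(m+1)^m$, and combining this with the defining inequalities of $T$. The only (mild) extra content you add is the explicit verification that the threshold sequence $m^{m+1}/(m+1)^m$ is strictly increasing, which the paper states as a direct observation; your reduction to $(m+1)^2>m(m+2)$ is a clean way to justify it.
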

\begin{proof}
First observe that $\alpha > m^{m+1}/(m+1)^m$ for $m<T$ and
$\alpha \le m^{m+1}/(m+1)^m$ for $m \ge T$, where $m \in \N$.
Since
$$
 m(m+1)\left (\frac{1}{m} \log \frac{m}{\alpha}  - \frac{1}{m+1}
\log\frac{m+1}{\alpha} \right )  = \log \frac{m^{m+1}}{(m+1)^m \alpha},
$$
we have the desired result.
\end{proof}

\begin{remark}
\label{Rem:alphaT}
Let $\alpha_T := T^{T+1}/(T+1)^T$.
The equality $m = \alpha e^{\nu(\alpha)m}$ holds iff ($m=T$) or ($m=T+1$ and $\alpha = \alpha_T$).
\end{remark}

For the remainder of this subsection, we abbreviate $\nu:=\nu(\alpha)$.

\begin{lemma}\label{ideal_conv}
For some positive sequence $(a_n)$ we define inductively
\begin{equation}
\label{eq:recursion}
\chi_t:=\chi_t(\alpha, (a_n)) := \max\big\{a_t, \tfrac{t-1}{\alpha}\chi_1, \ldots, \tfrac{1}{\alpha}\chi_{t-1} \big\}.
\end{equation}
Then, if $\displaystyle \lim_{n\to\infty} a_n e^{-\nu n}=0$, there are positive constants $c$ and $c'$ such that
$$c' e^{\nu t} \le \chi_t \le c e^{\nu t} \quad \text{  for all $t \ge 1$,}$$ 
and therefore we have
$$\lim_{t\to\infty}\frac{\log \chi_t}{t}=\nu.$$
\end{lemma}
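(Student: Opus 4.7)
The plan is to establish the two sandwiching inequalities separately and then take logarithms and divide by $t$. The whole argument is driven by Lemma~\ref{lem:nu}, which provides simultaneously the uniform estimate $m/\alpha \le e^{\nu m}$ for every $m \in \N$ and the matching equality $T/\alpha = e^{\nu T}$. The former will power the upper bound and the latter the lower bound, so that the two bounds are tight with the same exponential rate $\nu$.

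For the upper bound, I would set $c := \sup_{n\ge 1} a_n e^{-\nu n}$, which is finite because the hypothesis $a_n e^{-\nu n}\to 0$ forces the sequence to be bounded, and prove $\chi_t \le c e^{\nu t}$ by strong induction on $t$. The base case $\chi_1 = a_1 \le c e^{\nu}$ is immediate. In the inductive step, inserting $\chi_j \le c e^{\nu j}$ into a generic term $\tfrac{t-j}{\alpha}\chi_j$ of the maximum and reindexing with $m = t-j \in \{1,\ldots,t-1\}$ rewrites that term as $c e^{\nu t}\cdot \tfrac{m}{\alpha} e^{-\nu m}$; the inequality $m \le \alpha e^{\nu m}$ from Lemma~\ref{lem:nu} shows the second factor is $\le 1$. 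The $a_t$ term is bounded by $c e^{\nu t}$ by the definition of $c$, so the whole maximum is.

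For the lower bound, the recursion directly yields $\chi_{s+T} \ge \tfrac{T}{\alpha}\chi_s$ for every $s \ge 1$, and the exact identity $T/\alpha = e^{\nu T}$ of Lemma~\ref{lem:nu} turns this into $\chi_{s+T} \ge e^{\nu T}\chi_s$. Writing every $t \ge 1$ uniquely as $t = r + kT$ with $r \in \{1,\ldots,T\}$ and $k \ge 0$, and iterating, one gets $\chi_t \ge e^{\nu kT}\chi_r = e^{-\nu r}\chi_r\,e^{\nu t}$. Since each $\chi_r$ is bounded below by $a_r > 0$, the minimum $c_0 := \min_{1\le r\le T}\chi_r$ over the $T$ initial values is strictly positive, and $c' := c_0 e^{-\nu T}$ does the job.

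Combining the two bounds shows $\log \chi_t / t$ is squeezed between $\nu + (\log c')/t$ and $\nu + (\log c)/t$, which converge to $\nu$. I do not expect any serious obstacle: the substantive content sits entirely in Lemma~\ref{lem:nu}, which has already been established; the only point requiring mild care is the observation that $T$ is precisely the integer at which the inequality in Lemma~\ref{lem:nu} is saturated, which is exactly what makes the upper and lower exponential rates match.
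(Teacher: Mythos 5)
Your proof is correct and follows essentially the same route as the paper's: the upper bound rests on the inequality $m \le \alpha e^{\nu m}$ and the lower bound on the saturated case $T = \alpha e^{\nu T}$, both from Lemma~\ref{lem:nu}, combined with induction. Your choice $c = \sup_n a_n e^{-\nu n}$ for the upper bound is marginally cleaner than the paper's two-part constant $\max\{1,\chi_1,\ldots,\chi_{n_0}\}$, and your lower bound via Euclidean division $t = r + kT$ is the same argument the paper phrases as a one-step induction; neither difference is substantive.
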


\begin{proof}
Abbreviate $\hat \chi_t := \chi_t /c'$
with $c'=e^{-\nu T}\min\{\chi_1,\chi_2,\ldots,\chi_T\}$.
Obviously, $\hat \chi_t \ge e^{\nu t}$ for $t \le T$.
Now assume $ n \ge T$ and $\hat \chi_t \ge e^{\nu t}$ for all $t \le n$.
By the assumption and~\eqref{Eq:numax}, we have
\begin{align*}
\hat \chi_{n+1} 
\ge
\frac{T}{\alpha} \hat \chi_{n+1-T} 
\ge\frac{T}{\alpha} e^{\nu(n+1-T)}
=e^{\nu(n+1)}.
\end{align*}
Induction gives
$\hat \chi_t \ge e^{\nu t}$ and hence $\chi_t \ge c' e^{\nu t}$ for all $t \ge 1$.
\medskip

Now, choose a positive integer $n_0$ such that $a_n \le e^{\nu n}$ for all $n \ge n_0$. 
Let $\bar \chi_t = \chi_t/c$ with $c = \max\{1,\chi_1,\chi_2,\ldots,\chi_{n_0}\}$.
Obviously,
$\bar \chi_t \le 1 \le e^{\nu t}$ for all $t \le n_0$.
Now let $n \ge n_0$ and assume that $\bar \chi_t \le e^{\nu t}$ for all $t \le n$.
Then,
\begin{align*}
\bar \chi_{n+1} &= 
\max\left \{\tfrac{a_{n+1}}{c}, \tfrac{n}{\alpha}\bar \chi_1,\tfrac{n-1}{\alpha}\bar \chi_2,\ldots,
\tfrac{n-k+1}{\alpha}\bar \chi_k,\ldots,\tfrac{1}{\alpha}\bar \chi_{n}\right \} \\
&\le
\max\left \{e^{\nu(n+1)}, \tfrac{n}{\alpha}e^\nu,\tfrac{n-1}{\alpha}e^{2\nu},\ldots
\tfrac{n-k+1}{\alpha} e^{\nu k},\ldots,\tfrac{1}{\alpha} 
e^{\nu n}\right \} \le e^{\nu(n+1)},
\end{align*}
where we have used~\eqref{Eq:numax}.
By induction, we have $\chi_t \le c e^{\nu t}$ for all $t\ge 1$. 
\end{proof}

\noindent
For later reference we define 
\begin{equation}\label{defi5}
\tilde \chi_i(t) := 
\begin{cases}-\infty, & \text{ if }  i<0,\\
a_{t},&  \text{ if } i=0,\\
(t-i)\chi_i/\alpha, & \text{ if }  1\le i \le t-1.
\end{cases}
\end{equation}

\begin{lemma}
\label{Th:Tp}
Define 
\begin{align}
\label{Eq:It}
I_t:=\max\{i<t \colon \chi_t = \tilde \chi_i(t)\}.
\end{align}
Then $t-I_t$ is bounded. 
\end{lemma}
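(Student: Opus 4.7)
The plan is to exploit the two-sided bound $c' e^{\nu t} \le \chi_t \le c e^{\nu t}$ provided by Lemma~\ref{ideal_conv}, together with the decay hypothesis $a_n e^{-\nu n}\to 0$, to force the argmax index $I_t$ to stay within a bounded distance of $t$.

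First, I would rule out the case $I_t=0$ for all but finitely many $t$. Indeed, if $I_t = 0$ then by~\eqref{defi5} we have $\chi_t = \tilde\chi_0(t) = a_t$, and combined with the lower bound of Lemma~\ref{ideal_conv} this gives $a_t e^{-\nu t} \ge c' > 0$. Since $a_n e^{-\nu n}\to 0$ by assumption, this can occur only for finitely many $t$, so there exists $t_0$ such that $I_t \ge 1$ for every $t \ge t_0$.

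Next, for such $t$ set $k := t - I_t \ge 1$. From the definition of $I_t$ and the upper bound of Lemma~\ref{ideal_conv},
\[
\chi_t \;=\; \frac{k}{\alpha}\,\chi_{I_t} \;\le\; \frac{kc}{\alpha}\, e^{\nu(t-k)},
\]
while the lower bound says $\chi_t \ge c' e^{\nu t}$. Dividing one inequality by the other yields
\[
\frac{e^{\nu k}}{k} \;\le\; \frac{c}{c'\alpha}.
\]
Since $\nu > 0$, the left-hand side tends to infinity as $k\to\infty$, so $k = t - I_t$ must lie in a bounded set, which is precisely the claim.

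There is no serious obstacle here: everything follows from a short calculation once the exponential sandwich of Lemma~\ref{ideal_conv} is in hand. The only subtlety worth flagging is that one genuinely needs both sides of the bound $c' e^{\nu t} \le \chi_t \le c e^{\nu t}$; the weaker statement $\log\chi_t / t \to \nu$ controls only the leading exponential order and would not be strong enough to pin down the multiplicative gap $e^{\nu(t-I_t)}$ that drives the argument.
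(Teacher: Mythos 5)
Your proof is correct and follows essentially the same line as the paper's: both rely on the two-sided exponential sandwich from Lemma~\ref{ideal_conv}, dismiss the case $\chi_t=a_t$ for large $t$ using the decay hypothesis, and then derive a contradiction from the observation that $(k/\alpha)\chi_{t-k}$ would fall strictly below the lower bound $c'e^{\nu t}$ once $k$ is large. The paper phrases this via the auxiliary quantity $A(m)=m e^{-\nu m}/\alpha\to 0$, while you divide the two inequalities to get $e^{\nu k}/k\le c/(c'\alpha)$, but the argument is the same.
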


\begin{proof}
By Lemma~\ref{ideal_conv} we have $c'e^{\nu t} \le \chi_t \le c e^{\nu t}$ for all $t$.
Since there is $t_0$ such that $c'e^{\nu t} > a_t$ for all $t \ge t_0$,
we can write, for $t > t_0$,
\begin{align*}
\chi_t = \max\left \{\tfrac{t-1}{\alpha}\chi_1,\ldots,\tfrac{1}{\alpha}\chi_{t-1}
\right \}.
\end{align*}
Now it is enough to show that $t - I_t$ is bounded for $t> t_0$.
\smallskip

Note that, for $1 \le m \le t-1$,
\begin{align*}
c e^{\nu t} A(m)
\ge \tfrac{m}{\alpha} \chi_{t-m} \ge 
c' e^{\nu t} A(m),
\end{align*}
where $A(m) = me^{-\nu m} /\alpha $ with $A(T)=1$.
Since $\lim_{m\rightarrow \infty} A(m)=0$, there is $m_0$ such that
$c'> c A(m)$ and hence 
$$\tfrac{m}{\alpha} \chi_{t-m} < c' e^{\nu t},  \text{ for all $m \ge m_0$.}$$
As the right hand side is a lower bound of $\frac{T}\alpha \chi_{t-T}$ we get that
$t - I_t$ cannot be larger than $\max\{m_0,t_0\}$, as desired.
\end{proof}

\begin{remark}
\label{Re:Tp}
If we choose $T' > \sup\{t-I_t \colon t\in\N\}$, then, for all $t$,
\begin{align*}
\chi_t = \max\{a_t,(t-1)\chi_1/\alpha,\ldots,\chi_{t-1}/\alpha\}
=\max\{\tilde\chi_{t-T'}(t),\ldots,\tilde \chi_{t-1}(t)\}.
\end{align*}
In words, $\chi_t$ is completely determined
by $\tilde \chi_i(t)$ for $i$ within the window $t-T' \le i \le t-1$.
This fact will play an important role in the proof of Theorem~\ref{Th:mainthm}.
\end{remark}

\noindent
We conclude the subsection with two estimates for classical Galton-Watson processes.

\begin{lemma}
\label{Th:galton}
Consider a supercritical Galton-Watson process $(\X_t)_{t\geq0}$ with Poisson offspring distribution with mean $\theta>1$,
starting in generation 0 with a single 
individual. Fix $0<x<1$ and an integer $n \ge 1$. Then,
\begin{align}
\label{Eq:GL}
\P\left ( \X_t \ge   x^t \theta^t \text{ for all } 1\le t \le n  \right ) 
\ge 1 
- n \frac{(1- x)^{-2}}{\theta-1}.
\end{align}
\end{lemma}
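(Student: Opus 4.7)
The plan is to apply Chebyshev's inequality at each generation and then a union bound. The key input is the standard variance formula for a Galton-Watson process. Since the Poisson$(\theta)$ offspring law has mean $\theta$ and variance $\theta$, one has $\mathbb E[\X_t]=\theta^t$ and
$$\mathrm{Var}(\X_t)=\theta\cdot\theta^{t-1}\cdot\frac{\theta^t-1}{\theta-1}=\frac{\theta^t(\theta^t-1)}{\theta-1}\le \frac{\theta^{2t}}{\theta-1}.$$
This is derived by conditioning on $\X_{t-1}$ and iterating the recursion $\mathrm{Var}(\X_t)=\theta\,\mathbb E[\X_{t-1}]+\theta^2\,\mathrm{Var}(\X_{t-1})$.

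With the variance bound in hand, the event $\{\X_t<x^t\theta^t\}$ forces $|\X_t-\theta^t|>(1-x^t)\theta^t$, so Chebyshev's inequality gives
$$\P(\X_t<x^t\theta^t)\le \frac{\mathrm{Var}(\X_t)}{(1-x^t)^2\theta^{2t}}\le \frac{1}{(1-x^t)^2(\theta-1)}.$$
For $0<x<1$ and $t\ge 1$ we have $x^t\le x$, hence $1-x^t\ge 1-x$, and the right-hand side is at most $(1-x)^{-2}/(\theta-1)$, a bound that is uniform in $t$.

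Taking the union bound over $t=1,\dots,n$ then yields
$$\P\bigl(\exists\,t\in[1,n]:\X_t<x^t\theta^t\bigr)\le \frac{n(1-x)^{-2}}{\theta-1},$$
which is equivalent to the claimed inequality \eqref{Eq:GL}. There is no serious obstacle here: the only nontrivial computation is the Galton-Watson variance formula, and the uniformity in $t$ of the per-generation Chebyshev estimate is what allows the clean linear-in-$n$ error term. One should just be careful to use $1-x^t\ge 1-x$ (valid because $x<1$ and $t\ge 1$) rather than trying to exploit the smaller exceedance $(1-x^t)\theta^t$ at small $t$, since the stated bound does not depend on $t$.
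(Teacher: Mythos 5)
Your proof is correct and follows essentially the same route as the paper's: compute the Galton--Watson variance, apply Chebyshev at each generation using $1-x^t\ge 1-x$, and close with a union bound. The only cosmetic difference is that you drop the $-\theta^t$ term from the variance before dividing by $\theta^{2t}$, while the paper carries the factor $(1-\theta^{-t})$ one step further and then discards it; both give the identical per-generation bound.
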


\begin{proof}
First note that 
(see, e.g., \cite{H63})
\begin{align*}
\E(\X_t) = \theta^t,\quad \V(\X_t) = \frac{\theta^{2t} - \theta^{t}}{\theta-1},
\end{align*}
and that
\begin{align}
\P\left ( \X_t \ge    x^t \theta^t \text{ for all }1\le t \le n \right ) 
\ge 1 - \sum_{t=1}^{n} 
\P\left ( \X_t \le  x^t \theta^t \right ),
\label{Eq:GWl}
\end{align}
where we have used the sub-additivity of probability measure.
Using Chebyshev's inequality, we get
\begin{align*}
\P&\left ( \X_t \le  x^t \theta^t \right )=
\P\left (  \theta^t - \X_t \ge   \theta^t-x^t \theta^t \right ) \le
\P\left ( \vert \X_t- \theta^t\vert  \ge   \theta^t-x^t \theta^t \right ) \\
&\le \left (1-x^t \right )^{-2} \frac{\V(\X_t)}{ \theta^{2t}} 
=\left (1-x^t \right )^{-2}\frac{(1-\theta^{-t})}{(\theta-1)} 
\le \frac{(1-x)^{-2}}{\theta-1},
\end{align*} 
which, along with~\eqref{Eq:GWl}, gives the claimed inequality.
\end{proof}

\begin{lemma}
\label{Th:tdG}
For a Galton-Watson process $(\X_t)$ with $\X_0 = K_0$ and generation dependent 
offspring distribution $N_t$ with $\E [N_t] \le N$ for all $t$,
\begin{align*}
\P\big(\X_t \le K N^t B^t \text{ for all } t\ge 1\big) 
\ge 1 - \frac{K_0}{K(B-1)},
\end{align*}
for all $B>1$ and $K>0$.
\end{lemma}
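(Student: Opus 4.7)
The plan is to combine Markov's inequality applied to each generation with a union bound and a geometric series. The key observation is that although Doob's maximal inequality applied to the supermartingale $\X_t/N^t$ would only control a \emph{fixed} threshold, the target event here involves a threshold $KB^t$ that grows geometrically in $t$, which makes the tail sums over $t$ summable.

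\textbf{Step 1.} I would first establish the moment bound $\E[\X_t]\le K_0 N^t$ for every $t\ge 0$. This follows by induction: conditional on $\mathcal F_t$, the generation size $\X_{t+1}$ is a sum of $\X_t$ independent copies of $N_t$, so the tower property gives
\begin{align*}
\E[\X_{t+1}] = \E\bigl[\X_t \, \E[N_t]\bigr] \le N\,\E[\X_t],
\end{align*}
and iteration from $\E[\X_0]=K_0$ yields the claim.

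\textbf{Step 2.} For each fixed $t\ge 1$, Markov's inequality gives
\begin{align*}
\P\bigl(\X_t > K N^t B^t\bigr) \le \frac{\E[\X_t]}{KN^t B^t} \le \frac{K_0}{KB^t}.
\end{align*}
A union bound over $t\ge 1$ and summation of the geometric series then produces
\begin{align*}
\P\bigl(\exists\, t\ge 1 : \X_t > K N^t B^t\bigr)
\le \sum_{t=1}^\infty \frac{K_0}{K B^t} = \frac{K_0}{K(B-1)},
\end{align*}
and taking complements yields the stated bound.

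There is essentially no real obstacle: all three steps are elementary. The only point that requires a moment of reflection is the choice of normalization; one must recognise that attempting to apply Doob's maximal inequality to $\X_t/N^t$ with a constant threshold $K$ would only give $K_0/K$ and would not involve $B$ at all, whereas the geometric growth of the threshold in the lemma is precisely what makes the Markov$+$union-bound argument converge to the prefactor $1/(B-1)$.
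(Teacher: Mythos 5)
Your proposal is correct and takes essentially the same route as the paper: an inductive bound $\E[\X_t]\le K_0 N^t$, Markov's inequality at each generation, then a union bound summed as a geometric series.
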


\begin{proof}
By Markov's inequality, we have
\begin{align*}
\P(\X_t \ge K N^t B^t ) \le \frac{\E[\X_t]}{K N^t} B^{-t}.
\end{align*}
Since $\E[\X_{t+1}\vert \X_t] = \X_t \E [N_t]$,
we have $\E[\X_t] = K_0 \prod_{i=0}^{t-1} \E [N_i] \le K_0 N^t$,
which gives
\begin{align*}
\P(\X_t \ge K N^t B^t ) \le \frac{K_0}{KB^{t}}.
\end{align*}
Since
$$
\P( \X_t \le K N^t B^t \text{ for all }t\ge1) 
\ge
1-\sum_{t=1}^\infty
\P(\X_t \ge K N^t B^t) ,
$$
a geometric sum gives the claimed inequality.
\end{proof}

\subsection{Proof of the lower bound}\label{low}

In this subsection we show that, for given $\alpha>0$ and all $\alpha'>\alpha$, we have 
\begin{align}
\P\Big( \liminf_{t\rightarrow\infty}\frac{\log \log X(t)}{t} \ge \nu(\alpha') \,\Big\vert \, \text{survival}\Big) = 1.
\label{Eq:main1}
\end{align}
{In both models at each generation $s$ the lineage originating from
the mutant with fitness $W_s$ dominates a version  $(\hXt(f))_{t\geq s}$  of the 
same model starting in generation~$s$ with a single individual of fitness $f=W_s$.}
If there is at least one~$s$ such that
$$
\liminf_{t\rightarrow\infty}\frac{\log \log \hXt(W_s)}{t} \ge \nu(\alpha')
$$
then~\eqref{Eq:main1} is proved. As $(W_t)$ is unbounded almost surely on survival
it therefore suffices to show that
\begin{align}
\lim_{f\rightarrow \infty} 
\P\left ( 
\liminf_{t\rightarrow\infty}\frac{\log \log \hXt(f)}{t} \ge \nu(\alpha')\right 
) = 1.
\label{Eq:mainl}
\end{align}
As $(\hXt(x))$ can be coupled to an \sm $(\St(x))$ with the same initial condition such that 
$\hXt(x)\geq \St(x)$ for all $t\geq0$, the result follows by combining Lemma~\ref{ideal_conv} with the following statement.

\begin{lemma}
\label{Th:mainlow}
Fix  $0<\epsilon<1/2$ and let 
$$E(f):= \big\{ \St(f) \ge (1-\beta)^t f^{\chi_t'}   \text{ for all } t\ge 1  \big\},$$
where 
$\chi_t' := \chi_t(\alpha', (\frac{n}2))$ with $\alpha':=\alpha/(1-2 \epsilon)$.
Then 
$$
\lim_{f \rightarrow \infty} \P(E(f)) =1.
$$
\end{lemma}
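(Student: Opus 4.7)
The plan is to prove by induction on $t$ that the bound $\St(f) \geq (1-\beta)^t f^{\chi_t'}$ holds for every $t \geq 1$, working on a single good event whose probability tends to $1$ as $f \to \infty$. I would decompose the \sm population by founder, writing
\[
\St(f) = \X^{(0)}_t + \sum_{s=1}^t \X^{(s)}_{t-s},
\]
where $\X^{(0)}$ is the Galton--Watson process of unmutated descendants of the initial individual (Poisson mean $(1-\beta)f$) and $\X^{(s)}$ is the analogous process descending from the fittest mutant $W_s$ born at generation $s$ (Poisson mean $(1-\beta)W_s$, conditional on $W_s$). Writing $\Sigma_{s-1}$ for the sum of fitnesses at generation $s-1$ and letting $T'$ be the window constant from Remark~\ref{Re:Tp} applied to the sequence $(\chi_t')$, the good event is the intersection of the following: (i) $\X^{(0)}_t \geq ((1-\beta)f(1-f^{-1/4}))^t$ for all $1 \leq t \leq T'$; (ii) for each $s\geq 1$, $W_s \geq (\beta \Sigma_{s-1})^{1/\alpha'}/2$; (iii) for each $s \geq 1$, $\X^{(s)}_r \geq ((1-\beta) W_s (1-W_s^{-1/4}))^r$ for all $1 \leq r \leq T'$.

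On this event the induction proceeds as follows. By Remark~\ref{Re:Tp} the maximum defining $\chi_t'$ is attained either at the seed $a_t = t/2$ or at $(t-s)\chi_s'/\alpha'$ for some $s \in [t-T', t-1]$. In the first case (i) bounds $\St(f) \geq \X^{(0)}_t$ directly, which suffices for $t$ small (and this case disappears for large $t$ because $\chi_t' \sim e^{\nu t}$ by Lemma~\ref{ideal_conv}). In the second case the inductive hypothesis on $S_{s-1}(f)$ combined with (ii) yields $W_s \gtrsim f^{\chi_{s-1}'/\alpha'}$, and (iii) upgrades this to $\St(f) \geq \X^{(s)}_{t-s} \geq (1-\beta)^{t-s} f^{(t-s)\chi_{s-1}'/\alpha'}$, which must then be shown to dominate $(1-\beta)^t f^{\chi_t'}$. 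Here the $\epsilon$-margin in $\alpha' = \alpha/(1-2\epsilon)$ is precisely what is needed to absorb both Potter's slowly varying correction in $G$ and the small discrepancy between $\chi_{s-1}'$ and $\chi_s'$.

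For the probabilities, Lemma~\ref{Th:galton} applied with $\theta = (1-\beta)f$, $x = 1-f^{-1/4}$ and $n = T'$ gives $\P((\text{i})^c) = O(T'/f^{1/2}) \to 0$. Lemma~\ref{Th:Wa} combined with Potter's bound yields $\P((\text{ii})_s^c \mid \Sigma_{s-1}) \leq \exp(-c\, \Sigma_{s-1}^{2\epsilon})$, using the identity $1 - \alpha/\alpha' = 2\epsilon$. A conditional application of Lemma~\ref{Th:galton} with $\theta = (1-\beta)W_s$ and $x = 1-W_s^{-1/4}$ gives $\P((\text{iii})_s^c \mid W_s) = O(T'/W_s^{1/2})$. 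Summed over $s$, these tails converge rapidly provided $\Sigma_{s-1}$ and $W_s$ grow at least exponentially in $s$ on the good event, and hence the total failure probability vanishes as $f \to \infty$.

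The main obstacle is the apparent circularity: bounds (ii) and (iii) each rely on a prior lower bound on the quantity they are supposed to help generate. I would address this by a two-phase bootstrap. First, event (i) alone produces a preliminary estimate $S_t(f) \geq c^t$ for some $c>1$ and $t \leq T'$, which seeds an initial very fit mutant; a coarser induction that tracks only an exponential growth rate then propagates this forward, making the sums in the previous paragraph rigorously summable. Only then is the full double-exponential induction carried out. A minor technicality is the passage between $\Sigma_{s-1}$ (the sum of fitnesses entering Lemma~\ref{Th:Wa}) and $S_{s-1}(f)$ (the count of individuals entering the inductive hypothesis), but this is harmless: once any substantial mutant has appeared, its lineage dominates both quantities at the same exponent, so no loss occurs at the double-exponential scale.
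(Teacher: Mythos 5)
Your plan matches the structure of the paper's proof closely: decompose the population by founding mutant, use Lemma~\ref{Th:galton} for the window-$T'$ Galton--Watson growth of each mutant's unmutated descendants, use Lemma~\ref{Th:Wa} with a Potter-type bound on $G$ for the fittest-mutant tail, and iterate within the window $T'$ supplied by Remark~\ref{Re:Tp} (applied to $\chi_t'$ via Lemma~\ref{ideal_conv}). All the key ingredients are here. However, the ``main obstacle'' you identify and the two-phase bootstrap you sketch to address it are where the proposal departs from, and falls short of, the paper's argument.

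The circularity you describe is not actually present once the good event is built correctly. Rather than tying the fitness threshold in your condition (ii) to the \emph{random} quantity $\Sigma_{s-1}$ and the Galton--Watson threshold in (iii) to the \emph{random} fitness $W_s$, the paper fixes \emph{deterministic} targets $\w_i = f^{(1-\epsilon)\chi_i'/\alpha}$ and $\n_i = f^{\chi_i'/\alpha'}$, and then builds the good event as a nested chain $D_i = D_{i-1}\cap A_i\cap B_i$ with $B_i=\{W_i\geq\w_i\}$ and $A_i$ the event that the lineage founded by $W_i$ grows at rate at least $(1-\beta)\n_i$ for $T'$ generations. The point is that $\P(B_i\mid D_{i-1})$ and $\P(A_i\mid B_i\cap D_{i-1})$ can each be bounded below by $1-b_i$ for an explicit deterministic sequence $(b_i)$, since on $D_{i-1}$ the sum of fitnesses is deterministically at least $(1-\beta)^i f^{\chi'_i}$ (the recursion \eqref{eq:recursion} with the window $T'$ does the bookkeeping), and the Galton--Watson bound on $A_i$ is applied at a deterministic mean $\geq(1-\beta)\w_i$ with a deterministic ratio $\n_i/\w_i = f^{-\epsilon\chi_i'/\alpha}$. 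The $b_i$ are then summable uniformly in large $f$, and $\sum b_i\to 0$ as $f\to\infty$. There is no need for a preliminary coarse induction or two-phase bootstrap, and with your random thresholds the summability step you hand-wave (``provided $\Sigma_{s-1}$ and $W_s$ grow at least exponentially on the good event'') becomes genuinely awkward to justify without circular reasoning --- exactly the trap the deterministic thresholds are designed to avoid. So the gap is concrete: replace the $\Sigma_{s-1}$- and $W_s$-dependent thresholds in (ii) and (iii) by the deterministic ones above, define the nested good events $D_i$, and bound conditional probabilities; the bootstrap can then be dropped entirely.
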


\begin{proof}
We define
$\w_0:=f$,
$\n_0:=f^{1/2}$,
and ($i \ge 1$)
\begin{align*}
\w_i:=f^{(1-\epsilon)\chi_i'/\alpha},\quad
\n_i:=f^{(1-2\epsilon)\chi_i'/\alpha}=f^{\chi_i'/\alpha'}.
\end{align*}
For later reference, we also define $U_i := \n_i/\w_i$ for all $i\ge 0$.
\smallskip

Set $\epsilon_0=\epsilon/(2-2\epsilon)$. 
By our assumption on $\mu$, there is $f_0$ such that
$$G(x) \ge x^{-\alpha(1+\epsilon_0)} \text{ for all $x > f_0$.}$$
Since we are only interested in the limit as $f\to\infty$,
we may assume that $f$ is so large that
$(1-\beta)\w_0 \ge 2$, $(1-\beta) \w_1 \ge 2$, $U_0 < 1/2$,
$U_1 < 1/2$, and $\w_1 > f_0$. Notice that by assumption,
$$G(\w_1)\ge G(\w_i) \ge \w_i^{-\alpha(1+\epsilon_0)} = f^{-(1-\epsilon/2)\chi_i'}
\text{ for all $i \ge 1$.}$$
For $\chi'_t$, we choose $T'$ as in Remark~\ref{Re:Tp}. 
By $N_{i,t}$ we denote 
the number of individuals with fitness $W_i$ at generation $t$.
Define events 
\begin{align*}
A_i:= \{ N_{i,t} \ge (1-\beta)^{t-i} \n_i^{t-i} \text{ for all }i < t \le i+T'  \}, \quad 
B_i:= \{ W_i \ge \w_i \}.
\end{align*}
Let $D_{-1}$ be the certain event and, for $i\in\N_0$,
$$
D_{i}:= D_{i-1} \cap A_i \cap B_i,\quad A := \bigcap_{i=0}^\infty D_i.
$$
Now observe that
\begin{align*}
\P(A) = \lim_{i\rightarrow\infty} \P(D_i),\quad
\P(D_i) = \P(A_i\vert B_{i}\cap D_{i-1}) \P(B_i\vert D_{i-1})\P(D_{i-1}).
\end{align*}
By Lemma~\ref{Th:galton} we have
\begin{align*}
\P(A_i \vert  B_i \cap D_{i-1}) 
\ge 1 - 
\frac{T'(1-\n_i/W_i)^{-2}}{(1-\beta) W_i -1}
\ge 1 -\frac{T'(1-U_i)^{-2}}{(1-\beta) \w_i -1},
\end{align*}
where we have used~\eqref{Eq:GL}. 
\smallskip

To proceed, we find the $\X$ in Lemma~\ref{Th:Wa} on the event $D_{n-1}$
as 
\begin{align*}
\X \ge \sum_{i=0}^{t-1} N_{i,t-1} W_i
\ge (1-\beta)^t \sum_{i=t-T'-1}^{t-1} f^{\tilde\chi_i'(t)}
\ge(1-\beta)^t  f^{\chi_{t}'},
\end{align*}
where we have used $W_i \ge \w_i \ge \n_i$ and $\tilde \chi_i'(t)$ as in \eqref{defi5} for parameters $\alpha'$
and $a_s=s/2$. Using Lemma~\ref{Th:Wa} 
with $G(\w_i) \ge f^{-(1-\epsilon/2)\chi_i'}$, we have
\begin{align*}
\P(B_i\vert D_{i-1}) 
\ge 
1- \exp \left ( - \beta(1-\beta)^i  f^{\epsilon\chi_i'/2} \right ).
\end{align*}
Now we define
\begin{align*}
b_i:= \frac{T'(1 - U_i)^{-2}}{(1-\beta) \w_i-1} + 
(1-\delta_{i,0})\exp \left ( - \beta(1-\beta)^i  f^{\epsilon\chi_i'/2} \right ),
\quad
\g(f) := \sum_{i=0}^\infty b_i,
\end{align*}
where $\delta_{ij}$ is the Kronecker delta symbol.
Trivially, we have
$\lim_{f\rightarrow \infty} b_i = 0$ for all $i\ge 0$.
Since, for sufficiently large $f$, 
$b_s$ for fixed $s$ is a bounded and decreasing function of $f$ and 
since Lemma~\ref{ideal_conv} gives
$$
\lim_{s\rightarrow \infty} b_s 2^s = 0,
$$
there is $s_0$ such that $\vert b_s\vert  < 2^{-s}$ for
all $s >s_0$ and for all assumed value of $f$. Therefore, the series defining $\g(f)$ 
converges uniformly for sufficiently large $f$
and $\lim_{f \rightarrow \infty} \g(f)=0$. 
Therefore, for sufficiently large $f$, we get
\begin{align*}
\P(A) \ge \prod_{i=0}^\infty (1-b_i)
\ge
1-\g(f),\quad
\lim_{f\rightarrow \infty} \P(A) = 1,
\end{align*}
where we have used $(1-x)(1-y) \ge 1- x - y$ for $x,y\ge 0$.
As, on the event $A$,
\begin{align*}
\St(f) \ge \sum_{i=t-T'}^t N_{i,t} 
\ge (1-\beta)^t f^{\chi_t'}
\end{align*}
where we have assumed $N_{i,t} =0$ for $i<0$, 
we see that $A\subset E(f)$ and the proof is completed.
\end{proof}

In fact, Lemma~\ref{Th:mainlow} and its proof are applicable to the \mm verbatim,
except that $\St$ is replaced by $\hXt$. If we are interested in the proof only
for the \mm, we actually do not need to introduce $\St$.

\subsection{Proof of the upper bound}\label{up}

In this subsection we show that, for given $\alpha>0$ and all $\alpha'<\alpha$, we have for the
\mm denoted by $(\Ct)$, or $(\Ct(x))$ if in the initial generation there is a single individual with fixed fitness $x$,
that
\begin{align}
\P\Big( 
\limsup_{t\rightarrow\infty}\frac{\log \log \Ct}{t} \le \nu(\alpha')\Big)= 1.
\label{Eq:main2}
\end{align}
In case of extinction the upper bound holds by convention.
One can construct two processes with initial fitness $x\leq y$ on the same probability 
space such that $\Ct(x) \le \Ct(y)$ for all~$t$. Indeed, this can be done as follows. First construct $(\Ct(y))$ and look
at its genealogical tree truncated after the first mutant in every line of descent from the root. Removing any individual in that
tree together with all its offspring from $(\Ct(y))$ independently with probability $x/y$ we obtain $(\Ct(x))$.
\medskip%

We now construct an \mm  with special initial conditions. 
Fix $\epsilon>0$. For given~$\alpha$, let $\delta = (1+2\epsilon)/(1+3\epsilon)$, $\alpha'=\alpha/(1+3\epsilon)$, $\nu'=\nu(\alpha')$,
$T = T(\alpha')$, and
$$\Delta_0 = \frac{\epsilon}{\nu' (1+3\epsilon)},$$ 
which is equivalent to 
$\nu' \Delta_0 +\delta = 1$.
We choose $\Delta$ such that $0 < \Delta \le \Delta_0$ and
\begin{align*}
\hat n:=\frac{T}{\Delta}
\end{align*} 
is an integer.
We define, for a given $f>0$,
\begin{align*}
\chi_t' := e^{ \nu't},\quad \kappa_{n,t} := e^{\nu'(t-T + n \Delta)},\quad
\G_{n,t} := f^{\kappa_{n,t}/\alpha},
\quad
h_{n,t} := f^{(1+\epsilon)\kappa_{n,t}/\alpha}.
\end{align*}
We consider the \mm $(\wCt(f))_{t\geq T-1}$
starting in generation $T-1$ with an initial condition
such that 
there are $T$ different mutant classes with fitness $\G_{\hat n,m}$ 
for $0\leq m\leq T-1$
and the number of individuals with fitness $\G_{\hat n,m}$
is $\lfloor (\G_{\hat n,m})^{T-m-1} \rfloor$.
We only consider $f$ sufficiently large so that $(1-\beta) f>2$ and $(1-\beta) f^{\epsilon/\alpha}>2$.
\medskip

Now assume that we have proved, for all $\alpha'< \alpha$,
\begin{align}
\lim_{f\rightarrow\infty} \P
\Big( \limsup_{t\rightarrow \infty} \frac{\log \log \wCt(f)}{t} 
\le \nu(\alpha')\Big) =1.
\label{Eq:tbd}
\end{align}
Given an arbitrary $f>0$ and $\varepsilon>0$ pick $f_\varepsilon$ such that the probability above exceeds $1-\varepsilon$
and the smallest fitness in the initial condition of 
$\wCt(f_\varepsilon)$ is larger than $f$.  
Then \eqref{Eq:tbd} guarantees that
$$\P\Big(  \limsup_{t\rightarrow \infty} \frac{\log \log \Ct(f)}{t} 
\le \nu(\alpha') \Big) >1-\varepsilon,$$
which 
proves~\eqref{Eq:main2}.
So it is enough to prove~\eqref{Eq:tbd}.
Once~\eqref{Eq:tbd} is proved,
we use the natural coupling such that $\St \leq \Ct$ for all $t$. 
Then, almost surely on survival,
\begin{align*}
 \limsup_{t\rightarrow \infty} \frac{\log \log S_t}{t}
\le \limsup_{t\rightarrow \infty} \frac{\log \log M_t}{t}\le \nu(\alpha),
\end{align*}
which completes the proof of Theorem~\ref{Th:mainthm}.

\begin{lemma}
\label{Th:nonmut}
Let $Z_t$ be the number of \emph{non-mutated}
descendants at generation~$t$ of $\X$ individuals with fitness in a bounded 
interval $I$ with right endpoint $b$ at generation~$m$ of an \mm. Assume $\X \le K$. 
Then, for all $B>1$,
$$
\P (Z_t \le K (1-\beta)^{t-m} b^{t-m} B^{t-m} \text{ for all } t\ge m+1) \ge 1 - \frac{1}{B-1}.
$$
\end{lemma}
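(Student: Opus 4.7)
The plan is to recognise the non-mutated sub-population $(Z_t)_{t\ge m}$ as a generation-indexed Galton--Watson process and then invoke Lemma~\ref{Th:tdG}. The key structural observation is that, by definition of the \mm, a non-mutated offspring inherits its parent's fitness, so by induction every individual counted by $Z_t$ carries a fitness that lies in the original interval $I$ and is in particular bounded above by $b$. An individual of fitness $f \le b$ that contributes to some $Z_s$ produces a Poisson$(f)$ total number of offspring, and each of these is independently a mutant with probability $\beta$; by Poisson thinning its number of non-mutated children is Poisson with mean $(1-\beta)f \le (1-\beta) b$.

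Setting $N := (1-\beta)b$, this means that $(Z_{m+s})_{s\ge 0}$ is a branching process starting from $Z_m = \X$ individuals whose generation-dependent offspring distributions all have expectation at most $N$. Lemma~\ref{Th:tdG} then applies with initial population $K_0 = \X$, with the same $K$ and $B$ as in the statement, and with the above $N$, to give
$$\P\big( Z_{m+s} \le K N^s B^s \text{ for all } s \ge 1 \big) \ge 1 - \frac{\X}{K(B-1)} \ge 1 - \frac{1}{B-1},$$
where the second inequality uses the hypothesis $\X \le K$. Re-indexing via $t = m+s$ and substituting $N^{t-m} = (1-\beta)^{t-m} b^{t-m}$ produces precisely the asserted bound.

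There is no real obstacle in this argument: once one has identified the non-mutated descendants as a branching process whose per-individual offspring mean is uniformly bounded by $(1-\beta)b$, the result is an immediate application of Lemma~\ref{Th:tdG}. The only points requiring any care are the observation that non-mutation preserves the parental fitness (which allows the uniform bound $f \le b$ to propagate through all generations of the sub-process) and the use of Poisson thinning to pass from the total offspring distribution to the distribution of non-mutated offspring.
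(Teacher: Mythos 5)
Your argument is correct and is exactly the paper's proof, just spelled out in more detail: the paper likewise observes that the per-individual mean number of non-mutated offspring is bounded by $(1-\beta)b$ and immediately applies Lemma~\ref{Th:tdG}.
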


\begin{proof}
As the mean number of non-mutated offspring of an individual is bounded by
$(1-\beta)b$ we get the result by applying Lemma~\ref{Th:tdG}.
\end{proof}

\begin{lemma}
\label{Th:thmmut}
Suppose at generation $t-1$ of an \mm the population consists of $n$ individuals with
fitness $F_1,\ldots, F_n$.  Let 
\begin{align}
Y_{t}:= \beta \sum_{i=1}^n F_i
\label{Eq:bY}
\end{align}
and
let $Z$ be the number of mutants in generation $t$ 
with fitness in the interval~$(a,b]$. 
Then, with $p := \mu((a,b])$, we have
\begin{align*}
\P(Z> K)
=e^{-Y_tp}\sum_{n=K+1}^{\infty} \frac{(Y_tp)^n}{n!}. 
\end{align*}
\end{lemma}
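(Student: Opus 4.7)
The plan is to recognize that $Z$ is Poisson distributed with parameter $Y_t p$, after which the stated formula is just the standard tail expression for a Poisson random variable. Once $Z \sim \mathrm{Po}(Y_t p)$ is established, writing $\P(Z>K) = \sum_{n=K+1}^\infty e^{-Y_t p} (Y_t p)^n/n!$ is immediate.

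To establish the Poisson distribution I would argue by Poisson thinning, applied individual by individual. Consider the offspring of a single parent with fitness $F_i$: by the model definition the total offspring count is $\mathrm{Po}(F_i)$, each offspring is independently a mutant with probability $\beta$, and conditionally on being a mutant its fitness is drawn independently from $\mu$. Hence the number of offspring of individual $i$ that are mutants with fitness in $(a,b]$ is obtained by double thinning of a Poisson and is therefore $\mathrm{Po}(\beta p F_i)$, independently across $i$. Summing over $i=1,\dots,n$, the additivity of independent Poisson variables yields
$$Z \;=\; \sum_{i=1}^n Z_i \;\sim\; \mathrm{Po}\Bigl(\beta p \sum_{i=1}^n F_i\Bigr) \;=\; \mathrm{Po}(Y_t p).$$

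If the Poisson thinning step is considered too quick to cite, I would verify it by a one-line generating-function or direct computation: conditioning on the total offspring count $N_i \sim \mathrm{Po}(F_i)$, the number of mutants in the specified fitness window is $\mathrm{Bin}(N_i, \beta p)$, and a standard calculation gives $\mathrm{Po}(\beta p F_i)$ for the unconditional distribution. This is essentially the same structural observation that underlies Lemma~\ref{Th:Wa}, so the argument parallels that earlier proof. I do not expect any real obstacle here; the only point requiring a moment of care is to be explicit that the thinning decisions (mutant vs.\ non-mutant, and the subsequent fitness draws) are independent across offspring and across parents, which is built into the model specification.
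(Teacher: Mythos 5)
Your argument is correct and matches the paper's approach: the paper's proof simply observes that $Z$ is Poisson distributed with mean $Y_t p$, which is exactly what you establish via Poisson thinning, and then the stated identity is the Poisson tail formula. Your write-up merely supplies the routine details that the paper leaves implicit.
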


\begin{proof}
Observe that $Z$ is Poisson distributed with mean $Y_tp$.
\end{proof}

\begin{remark}
\label{Re:Zm}
Using Markov's inequality, we get 
\begin{align}
\label{Eq:ZmM}
\P(Z > K) \le \frac{Y_t p}{K},
\end{align}
which is useful when $K \gg Y_tp$.
By Chebyshev's inequality, for $K > Y_t$,
\begin{align}
\label{Eq:ZmC}
\P(Z > K ) 
\le \P(\vert Z-Y_tp\vert  \ge K - Y_tp)
\le \frac{Y_tp}{(K-Y_tp)^2} \le \frac{Y_t}{(K-Y_t)^2},
\end{align}
which is useful when $(K-Y_t)^2 \gg Y_t$.
For $K=0$, we will use
\begin{align}
\label{Eq:Z0}
\P(Z= 0) = e^{-Y_tp} 
\ge 1-Y_tp\ge 1- Y_t G(a).
\end{align}
\end{remark}

We denote the number of non-mutated descendants of initial
individuals with fitness $\G_{\hat n,m}$ 
at generation $t\ge T-1$ by $N_{m,T-1,t}$ 
and define $$N_{T-1,t} := \sum_{m=0}^{T-1} N_{m,T-1,t}.$$
The number of mutants that appear at generation $t\geq T$ with fitness 
in the interval $(h_{n-1,t}, h_{n,t}]$ is denoted by $N_{n,t,t}$ for $0 \le n \le \hat n+1$, where
we have assumed $h_{-1,t}:=0$ and $h_{\hat n+1,t}:=\infty$.
Typically, $N_{\hat n+1,t,t}$ will be zero.
%
The number of non-mutated descendants of $N_{n,m,m}$ at generation
$t>m$ is denoted by $N_{n,m,t}$. For $t \ge m \ge T$ define 
\begin{align*}
N_{m,t} := \sum_{n=0}^{\hat n+1} N_{n,m,t},
\end{align*}
which gives
\begin{align*}
\wCt(f) = \sum_{m=T-1}^t N_{m,t}.
\end{align*}
Let $(\theta_t)_{t\geq T-1}$ be a sequence satisfying
$\theta_{T-1} = \theta_{T} = T$ and, for $t>T$,
$$\theta_t = \hat n + \sum_{m=T-1}^{t-1} \theta_m.$$ 
Since $\theta_{t+1} - \theta_t = \theta_t$ for $t \ge T+1$, we have
\begin{align}
\theta_t = 
\begin{cases} 2^{t-T-1}(2 T + \hat n), & \text{ for }t > T\\
 T, & \text{ for }t\le T.
\end{cases}
\label{Eq:theta}
\end{align}

\begin{lemma}
\label{Lem:xl}
For  $T\le x\le m<t$ ($t,m$ are integers
and $x$ is real), we have
\begin{align}
\label{Eq:xl}
&e^{\nu'm } + \tfrac{t-m}{\alpha'} e^{\nu' (m-T)} \le e^{\nu' t},\\
\label{Eq:xu}
&e^{\nu'm} - e^{\nu'(x-\Delta)} +\delta \tfrac{t-m}{\alpha'} e^{\nu' x} \le e^{\nu' t}.
\end{align}
\end{lemma}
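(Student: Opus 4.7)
The plan is to reduce both inequalities, after cancelling a common factor $e^{\nu' m}$, to an estimate of the form $p(k) \le e^{\nu' k}$ with $k := t - m \ge 1$ a positive integer. The key algebraic inputs are the identity $e^{\nu' T} = T/\alpha'$ from Lemma~\ref{lem:nu}, the positive-integer bound $k \le \alpha' e^{\nu' k}$ from \eqref{Eq:numax}, and the built-in relation $\nu' \Delta_0 + \delta = 1$.

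For \eqref{Eq:xl}, I would substitute $e^{\nu'(m-T)} = \alpha' e^{\nu' m}/T$ so that the target reduces to $1 + k/T \le e^{\nu' k}$. Bernoulli's inequality gives $(1 + 1/T)^k \ge 1 + k/T$, while the defining property $\alpha' \le T^{T+1}/(T+1)^T$ of $T = T(\alpha')$ rewrites as $1 + 1/T \le (T/\alpha')^{1/T} = e^{\nu'}$, so $(1 + 1/T)^k \le e^{\nu' k}$. Chaining these two estimates closes \eqref{Eq:xl}. Note that the hypothesis $T \le x \le m$ plays no role in this first inequality.

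For \eqref{Eq:xu}, the crucial preliminary observation is that $\delta \le e^{-\nu' \Delta}$. Since $\Delta \le \Delta_0$ and $\nu' \Delta_0 = 1 - \delta$, this follows from $e^{-u} \ge 1 - u$ applied at $u = \nu' \Delta_0$. Multiplying by $e^{\nu' x}$ gives $\delta e^{\nu' x} \le e^{\nu'(x - \Delta)}$, which, applied to the third term of \eqref{Eq:xu}, yields
$$
e^{\nu' m} - e^{\nu'(x - \Delta)} + \delta \tfrac{t-m}{\alpha'} e^{\nu' x} \,\le\, e^{\nu' m} + e^{\nu'(x - \Delta)} \bigl(k/\alpha' - 1\bigr).
$$
I would then split into cases: if $k \ge \alpha'$ the second summand is non-negative and $e^{\nu'(x - \Delta)} \le e^{\nu' m}$ (from $x \le m$) bounds the right-hand side by $e^{\nu' m} \cdot k/\alpha' \le e^{\nu' m} \cdot e^{\nu' k} = e^{\nu' t}$ via \eqref{Eq:numax}; if $k < \alpha'$ the second summand is non-positive, so the expression is bounded by $e^{\nu' m} \le e^{\nu' t}$ outright.

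The main obstacle is identifying the right algebraic move in \eqref{Eq:xu}: the parameters $\delta$, $\Delta$, $\Delta_0$ look opaque until one notices that the definition $\nu' \Delta_0 + \delta = 1$ is engineered precisely so that $\delta e^{\nu' \Delta} \le 1$, allowing one to trade the attenuation factor $\delta$ for a shift of the exponent by $\Delta$. Once this observation is made, both inequalities collapse onto the core estimate $k \le \alpha' e^{\nu' k}$ from Lemma~\ref{lem:nu}, and the remaining work is purely mechanical.
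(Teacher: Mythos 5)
Both inequalities are established correctly, and for \eqref{Eq:xu} you take a genuinely different route than the paper, though from the same ingredients. For \eqref{Eq:xl} the substitution is the same as the paper's (replace $e^{-\nu' T}$ by $\alpha'/T$, reducing the claim to $1 + (t-m)/T \le e^{\nu'(t-m)}$); the paper reads off this estimate from \eqref{Eq:numax} applied at the integer $T+(t-m)$ and then dividing by $T = \alpha' e^{\nu' T}$, whereas you re-derive it via Bernoulli's inequality together with $1 + 1/T \le e^{\nu'}$, extracted directly from the upper bound $\alpha' \le T^{T+1}/(T+1)^T$ defining $T$. That is slightly more self-contained but mathematically equivalent. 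The real divergence is in \eqref{Eq:xu}: the paper observes that the left-hand side, as a function of $x$ on $[T,m]$, is monotone with direction governed by the sign of $\delta(t-m) - \alpha' e^{-\nu'\Delta}$, dismisses the decreasing case as trivial, and reduces the increasing case to $x=m$, where it bounds $1 - e^{-\nu'\Delta} \le \nu'\Delta$ and closes with $\nu'\Delta + \delta \le \nu'\Delta_0 + \delta = 1$. You instead extract $\delta \le e^{-\nu'\Delta}$ up front (from the same $1-u \le e^{-u}$ applied at $u = \nu'\Delta_0$, exploiting $\nu'\Delta_0 + \delta = 1$), collapse the last two terms into the single expression $e^{\nu'(x-\Delta)}(k/\alpha'-1)$ with $k=t-m$, and then split on the sign of $k-\alpha'$, using $x \le m$ in the nontrivial case and $k/\alpha' \le e^{\nu' k}$ from \eqref{Eq:numax} to finish. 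The paper's reduction to $x=m$ avoids a case distinction on $k$ and packages the error terms into a single linear bound $\nu'\Delta + \delta$, while your variant makes the role of the engineered identity $\nu'\Delta_0 + \delta = 1$ more transparent, at the cost of the extra case-split. Both are valid, and your remark that the hypothesis $T \le x$ is never actually used is correct (the paper's argument likewise only uses $x \le m$).
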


\begin{proof}
Using~\eqref{Eq:numax}, we have
\begin{align*}
e^{\nu'm} \left ( 1 + \tfrac{t-m}{\alpha'} e^{-\nu' T} \right )
= e^{\nu'm } \tfrac{T+t-m}{T} 
\le e^{\nu' t} e^{\nu' T} \tfrac{\alpha'}{T}
= e^{\nu' t},
\end{align*}
which proves~\eqref{Eq:xl}.
If $\delta (t-m) - \alpha' e^{-\nu' \Delta}$ is negative, then~\eqref{Eq:xu} is trivially valid.
If $\delta (t-m) - \alpha' e^{-\nu' \Delta}$ is positive, then
the left hand side of~\eqref{Eq:xu} has maximum at $x=m$. 
Therefore, it is enough to prove~\eqref{Eq:xu} only for $x =m$.
Plugging $x=m$, we have
\begin{align*}
e^{\nu'm} - e^{\nu'(m-\Delta)} + \delta \tfrac{t-m}{\alpha'} e^{\nu' m} 
& =e^{\nu'm} ( 1 - e^{-\nu'\Delta} ) + \delta
\tfrac{t-m}{\alpha'} e^{\nu' m} \\ &
\le e^{\nu' t} \left ( \nu' \Delta + \delta
\right ) \le 
e^{\nu' t},
\end{align*}
where we have used $1-e^{-x} \le x$, $e^{\nu'm} \le e^{\nu't}$, and $t-m \le \alpha' e^{\nu'(t-m)}$.
\end{proof}

\begin{lemma}
\label{Th:mainup}
Let $E(f):=\{\wCt(f) \le  \theta_{t+1} f^{\chi'_t}$ for all $t \ge T \}$.
Then
$$
\lim_{f\rightarrow \infty} \P(E(f)) = 1,
$$
which implies~\eqref{Eq:tbd}.
\end{lemma}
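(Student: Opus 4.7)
The plan is to prove the deterministic inequality $\wCt(f) \le \theta_{t+1} f^{\chi'_t}$ for all $t \ge T-1$ by induction on $t$, with a failure probability that is summable in $t$ and tends to $0$ as $f\to\infty$. The base case $t = T-1$ follows directly from the initial condition: the $m$-th cohort has size $\lfloor \G_{\hat n,m}^{T-m-1}\rfloor \le f^{(T-m-1)e^{\nu' m}/\alpha}$, and the exponent is at most $e^{\nu'(T-1)} = \chi'_{T-1}$ because the inequality $m \le \alpha' e^{\nu' m}$ from Lemma~\ref{lem:nu}, together with $\alpha'/\alpha = 1/(1+3\epsilon)<1$, yields $(T-m-1)e^{\nu'm}/\alpha \le e^{\nu'(T-m-1)}e^{\nu' m}/(1+3\epsilon) \le \chi'_{T-1}$. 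Summing the $T$ cohorts produces the prefactor $\theta_T = T$.

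For the inductive step, I would decompose
\begin{align*}
\wCt(f) \;=\; \sum_{m=T-1}^{t}\sum_{n=0}^{\hat n+1} N_{n,m,t}
\end{align*}
and control each summand by three mechanisms. First, for $n=\hat n+1$ (mutants beyond the threshold $h_{\hat n,t}$), estimate~\eqref{Eq:Z0} combined with the induction hypothesis and the value of $Y_t$ shows $N_{\hat n+1,t,t}=0$ with overwhelming probability, since the tail $G(h_{\hat n,t})$ behaves as a large negative power of $f$ that dominates the polynomial size of $Y_t$. Second, for each $0\le n\le \hat n$, estimates~\eqref{Eq:ZmM} and~\eqref{Eq:ZmC} from Remark~\ref{Re:Zm} give $N_{n,m,m} \le K_{n,m}$ for threshold values $K_{n,m}$ chosen to dominate the Poisson means $Y_m\,\mu((h_{n-1,m},h_{n,m}])$, with failure probability vanishing in $f$. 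Third, for $t>m$ the non-mutated descendants are bounded via Lemma~\ref{Th:nonmut} with $b=h_{n,m}$ and a fixed slack $B>1$, yielding $N_{n,m,t} \le K_{n,m}(1-\beta)^{t-m} h_{n,m}^{t-m} B^{t-m}$.

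Putting these bounds together, each class $(n,m)$ contributes an exponent of order $(1+\epsilon)\kappa_{n,m}(t-m)/\alpha$. Since $(1+\epsilon)/\alpha \le \delta/\alpha'$ by our choice of constants, this is at most $\delta(t-m)\kappa_{n,m}/\alpha'$, which is absorbed into $\chi'_t$ via inequality~\eqref{Eq:xu} of Lemma~\ref{Lem:xl}, applied with $x = m - \Delta + n\Delta$; the identity $\nu'\Delta + \delta = 1$ is precisely what creates room for the bin discretization. A union bound over all classes $(n,m)$ with $m\le t$ gives the target bound $\wCt(f) \le \theta_{t+1} f^{\chi'_t}$, and the doubling recursion~\eqref{Eq:theta} for $\theta_t$ absorbs the additive accumulation of the $\hat n + 1$ bins per cohort across generations.

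The principal obstacle will be tuning the thresholds $K_{n,m}$ and the slack $B$ so that simultaneously (a) the assembled deterministic bound matches exactly $\theta_{t+1} f^{\chi'_t}$, and (b) the sum of all per-class failure probabilities over $(n,m,t)$ is uniformly bounded and vanishes as $f\to\infty$. The discretization spacing $\Delta$ is delicate: smaller $\Delta$ tightens the per-bin multiplicative slack in $h_{n,m}^{t-m}$, but forces more bins $\hat n = T/\Delta$ and thus more terms in the union bound, so one needs to verify that the failure probability of each high-fitness bin decays superpolynomially in $f$ through the Fr\'echet tail of $\mu$, so that the union remains controlled. Once the inductive bound is established with high probability, then on $E(f)$ we have $\log\log \wCt(f) \le \log(\chi'_t \log f + \log \theta_{t+1}) = \nu' t + \log\log f + o(1)$, so that $\limsup_{t\to\infty}(\log\log \wCt(f))/t \le \nu'=\nu(\alpha')$, which is precisely~\eqref{Eq:tbd}.
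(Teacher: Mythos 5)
Your decomposition, the three estimating mechanisms (Remark~\ref{Re:Zm} for the birth of each mutant class, \eqref{Eq:Z0} for the empty top bin, Lemma~\ref{Th:nonmut} for subsequent non-mutated growth), and the use of Lemma~\ref{Lem:xl} to reassemble the exponents are exactly the paper's strategy, and your base case and final passage to \eqref{Eq:tbd} are correct.

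There is, however, one concrete gap in the propagation step. You apply Lemma~\ref{Th:nonmut} with ``a fixed slack $B>1$,'' which gives a failure probability of $1/(B-1)$ \emph{per class $(n,m)$}. Since the number of classes is $\hat n+1$ per generation and the generations range over all $m\geq T$, the total failure probability with a constant $B$ diverges, and in any case does not tend to $0$ as $f\to\infty$. The Fr\'echet tail cannot rescue this: the tail only enters through $Y_m\,\mu((h_{n-1,m},h_{n,m}])$ in the spawning step (your mechanism two), whereas the failure probability in Lemma~\ref{Th:nonmut} depends solely on $B$ and not on $\mu$. The fix is precisely hidden in the slack you already identified: since $(1+\epsilon)/\alpha < (1+2\epsilon)/\alpha = \delta/\alpha'$, one can afford to take $B = f^{\epsilon\kappa_{n,m}/\alpha}$, so that $bB = f^{\delta\kappa_{n,m}/\alpha'}$ is still absorbed by \eqref{Eq:xu}, while the per-class failure probability becomes $\bigl(f^{\epsilon\kappa_{n,m}/\alpha}-1\bigr)^{-1}$. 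Because $\kappa_{n,m}$ grows exponentially in $m$, the resulting double sum over $(n,m)$ converges for fixed large $f$ and vanishes as $f\to\infty$, which is what closes the union bound. This is exactly the choice made in the paper; with it, your argument goes through.
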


\begin{proof}
Set $\epsilon_0=\epsilon/(2+2\epsilon)$. By our assumption on $\mu$
there is $f_0$ such that 
$G(f) \le f^{-\alpha(1-\epsilon_0)}$ for all $f > f_0$.
Now we assume $h_{0,T}=f^{(1+\epsilon)/\alpha} > f_0$,
which gives
\begin{align}
G(h_{n,t}) \le f^{-(1+\epsilon/2)\kappa_{n,t}}, \quad \text{for all } 0\le n \le
\hat n \text{ and } t \ge T.
\end{align}
Let $$A_{n,T-1}:=\left \{ N_{n,T-1,t} \le \left \lfloor (\G_{\hat n,n})^{T-n-1} \right \rfloor 
(1-\beta)^{t-T+1} 
f^{(t-T+1)\chi'_n/\alpha'} \mbox{ for all } t\ge T \right \} $$
and define $A_{T-1} := \bigcap_{n=0}^{T-1} A_{n,T-1}$.
By Lemma~\ref{Th:nonmut} with $K= \lfloor (\G_{\hat n,n})^{T-n-1}  \rfloor$, $b = \G_{\hat n,n}=f^{\chi_n'/\alpha}$, and
$b B = f^{\chi_n'/\alpha'} = f^{(1+3\epsilon)\chi_n'/\alpha}$, we have
\begin{align*}
1-\P(A_{n,T-1}) \le \left ( f^{3 \epsilon \chi_n'/\alpha}-1\right )^{-1}, & \quad
\P(A_{T-1}) \ge 1 - b_{T-1}, \\
b_{T-1}:= \sum_{n=0}^{T-1} \left ( f^{3 \epsilon \chi_n'/\alpha}-1\right )^{-1}.
\end{align*}
Since $m e^{-\nu' m} \le \alpha'$ and $\G_{\hat n,n} \le f^{\chi'_n/\alpha'}$, 
we have $$N_{n,T-1,t} \le f^{\chi_n'(t-n)/\alpha'} \le f^{\chi'_t}\quad \text{for all }n,t.$$
Therefore, 
\begin{align}
\sum_{n=0}^{T-1}N_{n,T-1,t} \le T f^{\chi'_t},\quad \text{for all }t \ge T,
\label{Eq:NAT}
\end{align}
on the event $A_{T-1}$.
Let 
$$
A_{n,m,t} :=
\left\{
\begin{array}{ll}
\big\{ N_{0,m,t} \le 
\theta_m f^{\chi_m'} (1-\beta)^{t-m} 
f^{(t-m)\kappa_{0,m}/\alpha'} \big\},
&\quad \text{ for }n=0,\\[2mm]
\big\{N_{n,m,t} \le \frac{f^{\chi_n'}}{(\G_{n-1,m})^\alpha} (1-\beta)^{t-m} 
f^{\delta (t-m) \kappa_{n,m}/\alpha'} \big\},
&\quad \text{ for }1\le n\le \hat n, \\[3mm]
\big\{ N_{\hat n+1,m,t} = 0 \big\} &\quad \text{ for }n = \hat n +1.
\end{array}
\right.$$
Note that $A_{n,t,t}$ has information on the empirical distribution of 
mutants' fitness that appear at generation $t$.
We define
\begin{align*}
&\tilde A_{n,m} = \bigcap_{t=m+1}^\infty A_{n,m,t},\quad
A_{n,m}:=A_{n,m,m} \cap \tilde A_{n,m},\quad
A_m := \bigcap_{n=0}^{\hat n+1} A_{n,m},\\
&D_{T-1} := A_{T-1},\quad D_{n} := D_{n-1} \cap A_n,\quad
A(f) := \bigcap_{n=T-1}^\infty D_n.
\end{align*}
By Lemma~\ref{Lem:xl}, we have on the event $A_m$, that for all $t\ge m$,
\begin{align*}
\sum_{n=0}^{\hat n+1} N_{n,m,t} \le (\theta_m + \hat n) f^{\chi'_t},
\end{align*}
and, in turn,
\begin{align*}
\wCt(f) \le \left (\hat n + \sum_{m=T-1}^ t \theta_m \right ) f^{\chi'_t}
=\theta_{t+1} f^{\chi'_t}
\end{align*}
on the event $A(f)$. Therefore, $A(f) \subset E(f)$ and 
the proof is complete if we show
\begin{align*}
\lim_{f\rightarrow \infty} \P(A(f)) = 1.
\end{align*}
Now we investigate $\P(A_m \vert  D_{m-1})$.
First note that
\begin{align*}
1-\P(A_m\vert D_{m-1}) & \le \sum_{n=0}^{\hat n+1} \left [ 1 - \P(A_{n,m}\vert D_{m-1})
\right ],\\
\P(A_{n,m}\vert D_{m-1}) & =\P(\tilde A_{n,m}\vert A_{n,m,m}\cap D_{m-1})
\P(A_{n,m,m}\vert D_{m-1}),
\end{align*}
and, on the event $D_{m-1}$,
\begin{align}
Y_m\le \beta \theta_m f^{\chi'_m},
\label{Eq:Yt}
\end{align}
where $Y_m$ is defined in~\eqref{Eq:bY}.
\medskip

We begin with $\P(A_{\hat n+1,m}\vert D_{m-1})$, which clearly
equals $\P(A_{\hat n+1,m,m}\vert D_{m-1})$.
Using~\eqref{Eq:Z0} with $a = h_{\hat n,m}$ and 
$Y_m G(a)  \le \theta_m f^{-\epsilon \chi'_m/2}$,
we obtain
\begin{align*}
\P(A_{\hat n+1,m}\vert D_{m-1}) \ge 1 - 
b_{\hat n+1,m},\quad b_{\hat n+1,m}:= \theta_m f^{-\epsilon \chi'_m/2}.
\end{align*}
Now we consider $\P(A_{0,m}\vert D_{m-1})$.
Using~\eqref{Eq:ZmC} with $K = \theta_m f^{\chi_m'}$
and~\eqref{Eq:Yt}, we have
\begin{align*}
\P(A_{0,m,m}\vert D_{m-1}) 
\ge 1- \frac{\beta}{(1-\beta)^2 \theta_{m} } f^{-\chi'_m}.
\end{align*}
Using Lemma~\ref{Th:nonmut} with $B = f^{\kappa_{0,m}/\alpha'}/h_{0,m} =  f^{2\epsilon \kappa_{0,m}/\alpha}$, 
we have
\begin{align*}
\P(\tilde A_{0,m}\vert A_{0,m,m}\cap D_{m-1}) 
\ge 1 - \left ( f^{2 \epsilon \kappa_{0,m}/\alpha} -1 \right )^{-1}.
\end{align*}
Therefore,
defining
$$
b_{0,m}:= \frac{\beta}{(1-\beta)^2 \theta_{m} } f^{-\chi'_m}
+ \left ( f^{2 \epsilon \kappa_{0,m}/\alpha} -1 \right )^{-1},$$
we have
$\P(A_{0,m}\vert D_{m-1}) \ge 1-b_{0,m}$.
\pagebreak[3]
\medskip

Finally, we move on to 
$\P(A_{n,m}\vert D_{m-1})$ for $1 \le n \le \hat n$.
Using~\eqref{Eq:ZmM} with $K = f^{\chi_m' - \kappa_{n-1,m}}$,
$a = h_{n-1,m}$,
and $Y_m G(a) \le \theta_m f^{\chi_m' - (1+\epsilon/2) \kappa_{n-1,m}}$, we have
\begin{align*}
\P(A_{n,m,m}\vert D_{m-1}) \ge 1 -  \theta_m f^{-\epsilon \kappa_{n-1,m}/2}.
\end{align*}
Using Lemma~\ref{Th:nonmut} with $B = f^{\delta \kappa_{n,m}/\alpha'}/h_{n,m} =  f^{\epsilon \kappa_{n,m}/\alpha}$, 
we have
\begin{align*}
\P(\tilde A_{n,m}\vert A_{n,m,m}\cap D_{m-1}) 
\ge 1 - \left ( f^{\epsilon \kappa_{n,m}/\alpha} -1 \right )^{-1}.
\end{align*}
Therefore,
defining
$$
b_{n,m}:= \theta_m f^{-\epsilon \kappa_{n-1,m}/2}
+\left ( f^{\epsilon \kappa_{n,m}/\alpha} -1 \right )^{-1},
$$
we have
$
\P(A_{n,m}\vert D_{m-1}) \ge 1-b_{n,m}.
$
We define  
$$b_m:= \sum_{n=0}^{\hat n+1} b_{n,m} \text{ for }m \ge T,\quad \text{ and }
\g(f) := \sum_{m=T-1}^\infty b_m.$$
Recall that we have assumed $(1-\beta) f>2$ and $(1-\beta) f^{\epsilon/\alpha}>2$.
Since $b_m$ for given $m$ is a bounded function 
of $f$ which is decreasing to zero and 
$$
\lim_{m\rightarrow \infty} b_m 2^{m} = 0,
$$ 
there is $m_0$ such that $\vert  b_m\vert  < 2^{-m}$
for all $m >m_0$. Hence the series defining $\g(f)$ converges 
uniformly for sufficiently large $f$ 
and, accordingly, $\lim_{f \rightarrow \infty} \g(f) = 0$. 
Therefore, for sufficiently large $f$,
\begin{align*}
\P(A(f)) \ge \prod_{m=T-1}^\infty
( 1 - b_m) 
\ge
1-\g(f),
\end{align*}
and $\displaystyle\lim_{f\rightarrow \infty} \P(A(f)) = 1$,
which completes the proof.
\end{proof}

\section{\label{Sec:steady}Empirical frequency distributions} 

Apart from the fact that the population is dominated by a single mutant class at all times, the proof of the double-exponential growth rate $\nu$ presented in Sec.~\ref{Sec:pf} does not give any insight into the structure of the population. However, since the solution $\chi_t$ of the recursion relation (\ref{eq:recursion}) 
correctly describes the asymptotic growth of $X(t)$, it provides a natural starting point for addressing this question at least on a heuristic level.
In this section, we analyze the recursion relation in more depth to understand the 
demographic structure in the long time limit, which turns out to display a rather
rich behaviour.\medskip

\subsection{Numerical solution of the recursion relation}
\label{Sec:numerical_recursion}

To characterize the empirical frequency distribution we introduce the following quantities: 
\begin{align*}
J_i(t) &:=\frac{\log W_i}{\log W_t} \approx \frac{\chi_i}{\chi_t}, 
\;\; P(t) := \frac{\log X(t+1)-\log X(t)}{\log W_t} 
\approx \alpha \frac{\chi_{t+1} - \chi_t}{\chi_t}, \\
R_i(t) &:= \frac{\log W_i^{t-i} - \log X(t)}{\log X(t)}= \frac{t-i}{\alpha}J_i(t) -1,
\end{align*}
where the second approximate relations in the definitions of $J_i(t)$ and $P(t)$ become equalities in the formal deterministic limit $f \to \infty$ (see Sec.~\ref{Sec:sim}).
The ratio $J_i(t) \in [0,1]$ compares the log-fitness of the mutant class born at time $i$ to the log-fitness of the current fittest mutant.
Since $X(t+1) \approx (1-\beta) X(t) \bar F_t$ with $\bar F_t$ denoting the mean fitness 
of the population at generation $t$,
$P(t)$ quantifies the mean fitness at generation $t$ on the same scale. The decomposition in Eq.~(\ref{Eq:evol}) shows that the fraction of the population in mutant class $i$ at time $t$ is proportional to $W_i^{t-i}$, and therefore $R_i(t) \in [-1,0]$ serves as proxy of the (logarithmic) frequency distribution at generation $t$ over mutant classes $i$. 
\medskip

\begin{figure}
\centering
\includegraphics[width=\linewidth]{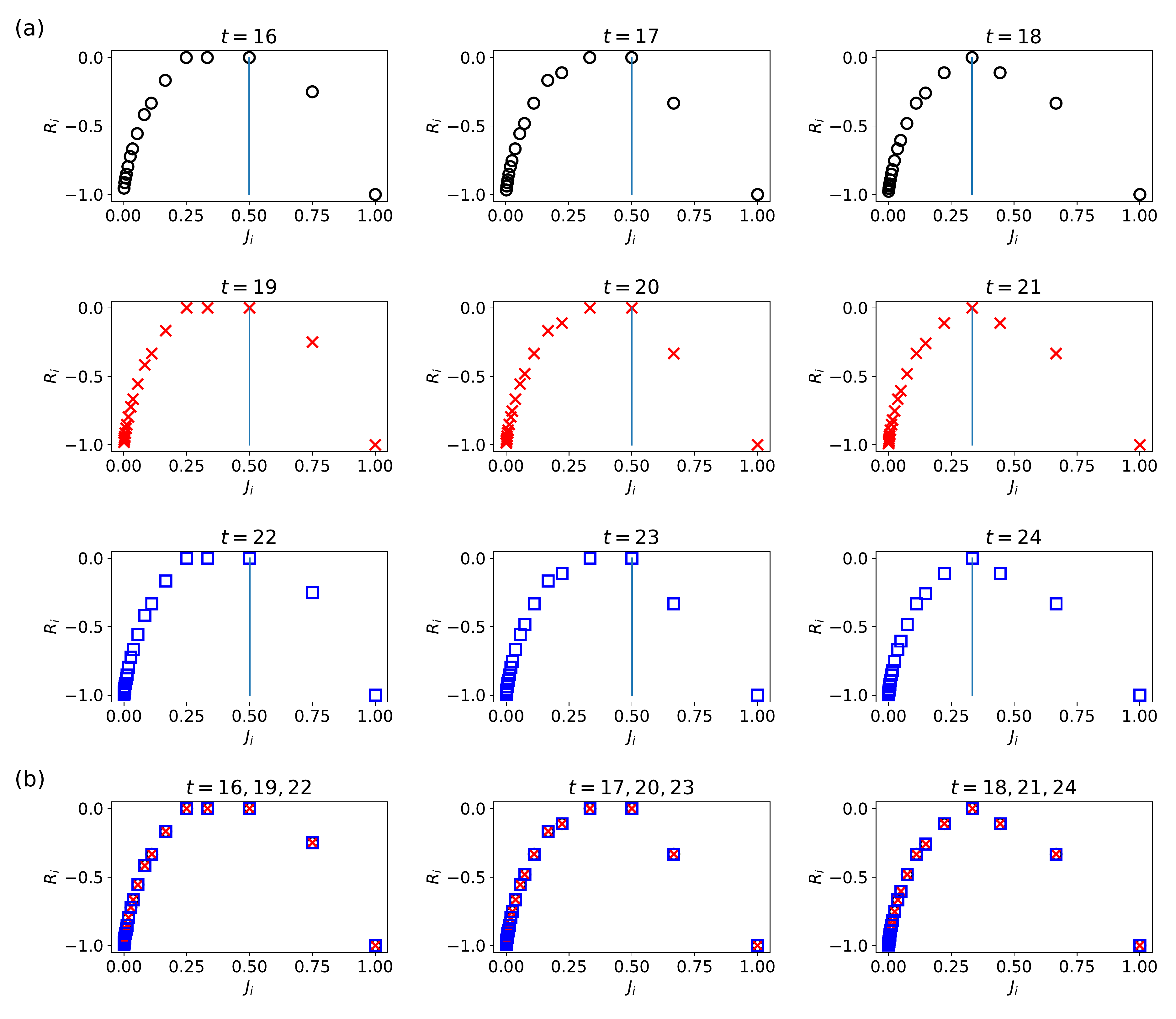}
\caption{\label{Fig:1conf} (a) Plots of $R_i(t)$ vs. $J_i(t)$ 
for $\alpha = 1$ at generations $t=16$, $17$, $\ldots$, $24$.
The vertical line at each panel indicates the location of the mean fitness $P(t)$.
(b) Data collapse plots of $R_i(t)$ vs. $J_i(t)$ for each column of (a).
}
\end{figure}

In Fig.~\ref{Fig:1conf}(a), we plot $R_i(t)$ against $J_i(t)$ for nine consecutive
generations, obtained by numerically solving the recursion relation (\ref{Eq:chire}) for $\alpha = 1$. The salient feature is the periodic behavior 
of the frequency distribution
with period $3$; note that $T=3$ for $\alpha=1$.
To illustrate the accuracy of the periodic behaviour, we present
data-collapse plots in Fig.~\ref{Fig:1conf}(b). In most regions of $J_i$,
the collapse looks perfect (since the number of mutant classes increases with the number of generations, the frequency distributions at different times cannot be identical).
A rigorous proof of the periodicity will be given in Sec.~\ref{Sec:chipf}.
\medskip

The periodicity was taken into account in the numerical estimates of $\nu$ reported in Fig.~\ref{Fig:makea}. Rather than monitoring $\log \chi_t/t$, which converges very slowly,
we computed the quantity
\begin{align*}
\hat \nu(t):= \frac{1}{T} \log \frac{\chi_{t+T}}{\chi_{t}},
\end{align*}
which approaches a constant in a relatively short time. 
\medskip

\subsection{\label{Sec:chipf}Periodicity of $\chi_t e^{-\nu t}$}

By Lemma~\ref{ideal_conv}, we know that 
\begin{align*}
c_t := \chi_t e^{-\nu t}
\end{align*}
is bounded away from zero and infinity.  Now we show that $c_t$ is not only bounded, but eventually becomes periodic. 
\begin{proposition}
\label{Prop:periodic}
For any sequence $(a_n)$ in the recursion relation \eqref{eq:recursion}, there is a $t_1$ such that
$c_t = c_{t+T}$ for all $t \ge t_1$.
\end{proposition}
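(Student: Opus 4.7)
The plan is to recast the recursion as a max-plus linear dynamical system and exploit its cyclicity. Setting $c_t := \chi_t e^{-\nu t}$ and $\rho_k := (k/\alpha)e^{-\nu k}$, the recursion~\eqref{eq:recursion} becomes
$$ c_t = \max\Big\{a_t e^{-\nu t},\, \max_{k \ge 1} \rho_k\, c_{t-k}\Big\}. $$
By Lemma~\ref{lem:nu} we have $\rho_k \le 1$ for every $k \in \mathbb{N}$ with $\rho_T = 1$, and by Remark~\ref{Rem:alphaT} this unit maximizer is unique unless $\alpha = \alpha_T$ (in which case also $\rho_{T+1} = 1$). By Lemma~\ref{Th:Tp} and Remark~\ref{Re:Tp}, for $t$ sufficiently large the max is attained within a finite window $k \in \{1,\ldots,T'\}$, and the initial contribution $a_t e^{-\nu t} \to 0$ drops out since $c_t$ is bounded below by Lemma~\ref{ideal_conv}. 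The $k = T$ term then gives the monotonicity $c_t \ge c_{t-T}$, so each of the $T$ residue-class subsequences $(c_{r+nT})_n$ is non-decreasing and bounded, hence converges to a limit $L_r$ satisfying the fixed-point system $L_r = \max_k \rho_k\, L_{(r-k) \bmod T}$.

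The remaining step, and the crux of the argument, is to upgrade this subsequential convergence to finite-time equality $c_t = L_{t \bmod T}$ for all $t \ge t_1$. The natural framework is to regard the state $\vec{c}_t := (c_{t-T'+1},\ldots,c_t)$ as evolving under a max-plus linear map $F$ with companion matrix $A$. This matrix is max-plus irreducible, since its precedence graph is strongly connected via the descending shift edges, and has max-plus eigenvalue zero, realized on the critical graph consisting of the length-$T$ cycle through the weight-zero edge $\rho_T$, together with a length-$(T+1)$ cycle through $\rho_{T+1}$ when $\alpha = \alpha_T$.

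The main obstacle is that a direct elementary perturbation argument is not enough for finite-time convergence: if the fixed-point system admits ties, i.e., $L_r = \rho_k L_{(r-k) \bmod T}$ holds for some $k \ne T$, then passing to the limit in $c_t = \max_k \rho_k c_{t-k}$ does not force $c_t = c_{t-T}$ at finite $t$, only asymptotically. The clean resolution is the max-plus cyclicity theorem, a standard result in tropical linear algebra: for an irreducible max-plus matrix with eigenvalue $\lambda$ and cyclicity $\sigma$ (the gcd of the lengths of the critical cycles), there exists $N_0$ with $A^{\otimes(n+\sigma)} = \lambda^\sigma \otimes A^{\otimes n}$ for all $n \ge N_0$. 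In our setting $\lambda = 0$ and $\sigma$ divides $T$ ($\sigma = T$ generically, $\sigma = \gcd(T,T+1) = 1$ at $\alpha = \alpha_T$), so $\vec{c}_{t+T} = \vec{c}_t$ for all $t \ge t_1$, yielding $c_{t+T} = c_t$ as claimed.
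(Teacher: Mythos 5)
Your proposal is correct and takes a genuinely different, shorter route than the paper. You recognise the recursion~\eqref{eq:recursion}, after renormalising by $e^{-\nu t}$ and discarding the $a_t$ term for large $t$ (justified by Lemma~\ref{ideal_conv} and Remark~\ref{Re:Tp}), as a finite-window max-plus linear system driven by a companion matrix of dimension $T'$, and then invoke the cyclicity theorem for irreducible max-plus matrices: after a finite transient, powers of such a matrix become periodic with period equal to the cyclicity of the critical graph. You correctly compute the max-plus eigenvalue as $0$ from $\rho_T = Te^{-\nu T}/\alpha = 1$ and $\rho_k\le 1$ (Lemma~\ref{lem:nu}), and the cyclicity as $T$ generically, dropping to $\gcd(T,T+1)=1$ when $\alpha=\alpha_T$ (Remark~\ref{Rem:alphaT}); in both cases it divides $T$, so $c_{t+T}=c_t$ eventually. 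This is a legitimate black-box argument that cleanly circumvents exactly the difficulty you identify, namely that ties in the fixed-point system $L_r=\max_k\rho_k L_{(r-k)\bmod T}$ prevent an elementary perturbation argument from yielding finite-time equality. The paper's proof is by contrast entirely self-contained: it extracts the limits $C_k:=\lim_n c_{k+nT}$ from the monotonicity $c_{t+T}\ge c_t$, derives two-sided bounds $\tfrac{T+1}{T}\le\varphi_k\le\tfrac{T}{T-1}$ on the successive ratios $\varphi_k=C_ke^{\nu}/C_{k-1}$ which truncate the effective recursion for large $n$ to the three lags $s\in\{T-1,T,T+1\}$, and then runs a careful case analysis on the deviations $\delta_{k,n}=1-c_{k+nT}/C_k$ to force them to vanish at a finite time. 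Your route buys brevity and a conceptual framing; the paper's route buys elementariness and, as a by-product, the explicit constraints~\eqref{Eq:restrictphi} on the limit profile which are reused in Sec.~\ref{Sec:steady}. If you were to write your argument out in full, the two points to spell out are that the companion matrix is irreducible (shift edges plus the back edges from every node into the last coordinate make the communication graph strongly connected) and that the only critical cycles are those of length $T$ and, at $\alpha=\alpha_T$, of length $T+1$, which you have indeed sketched correctly.
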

\ \\[-1.5cm]

\begin{proof}
In this proof, $k$ and $k'$ are exclusively used as integers in the range $1 \le k,k' \le T$.
Since $\chi_{t+T} \ge e^{\nu T} \chi_t$ (see Sec.~\ref{Sec:sim}), the sequence $(c_{k+nT})_n$ is nondecreasing and bounded. Consequently,
\begin{align}
C_k := \lim_{n\rightarrow\infty} c_{k + nT}
\label{Eq:Ck}
\end{align}
is well defined. Note that 
$\max\{C_k : 1 \le k \le T\}$ becomes the optimal upper bound in Lemma~\ref{ideal_conv}.
If $n$ satisfies $ n T>  T'$ with $T' > \max\{t-I_t\}$ (see Remark~\ref{Re:Tp}), then we have
\begin{align}
\nonumber
c_{k+nT} &= e^{-\nu (k+nT)}\max\left \{\frac{k+nT-t'}{\alpha}\chi_{t'} : k+nT-T' \le t' < k+nT\right \}\\
&= \max\left \{\frac{s}{\alpha}e^{-\nu s}c_{k+nT-s} : 1 \le s \le T'\right \}. 
\label{Eq:CT}
\end{align}
Taking $n$ to infinity, we get
%
\begin{align}
C_k = \max \left \{\frac{s}{\alpha}e^{-\nu s}C_{k-s} : 1 \le s \le T'\right \},
\label{Eq:infct}
\end{align}
and, by definition, $C_{k+mT} = C_k$ for any integer $m$.
Since $T e^{-\nu T}/\alpha = 1$ and $C_{k-T} = C_k$, we can rewrite \eqref{Eq:infct} as
\begin{align*}
C_k = \max \left \{\frac{s}{\alpha}e^{-\nu s}C_{k-s} : 1 \le s \le T+1\right \}.
\end{align*}
Comparing terms with $s=T-1, T, T+1$ for any $k$, we have
\begin{align*}
\frac{T-1}{T} e^{\nu} C_{k+1} =\frac{T-1}{\alpha}e^{-\nu (T-1)} C_{k+1}\le C_{k},\\
\frac{T+1}{T} e^{-\nu} C_{k-1} = \frac{T+1}{\alpha}e^{-\nu (T+1)} C_{k-1} \le C_{k},
\end{align*}
which gives
\begin{align*}
\frac{T+1}{T} \le \frac{C_{k+1} e^\nu}{C_k} \le \frac{T}{T-1},
\end{align*}
for all $k$. 
Let $\varphi_k = C_{k} e^\nu / C_{k-1}$ with $\varphi_1 =\varphi_{T+1}= C_1 e^\nu / C_T$. Then,
$$
C_k = C_1 e^{-\nu (k-1)}\prod_{j=2}^{k} \varphi_j,
$$
for $k > 1$. Setting $k=T+1$ and considering $C_{T+1} = C_1$, we have
$$
\prod_{j=1}^T \varphi_j = e^{\nu T} = \frac{T}{\alpha}.
$$

\noindent
To sum up, $C_k$ takes the form
\begin{align*}
C_k = C_0 e^{-\nu k} \prod_{j=1}^k \varphi_j,
\end{align*}
where $C_0$ is a positive constant (note that $\chi_t(\alpha,(C_0 a_n)) = C_0 \chi_t(\alpha,(a_n))$) and $\varphi_k$ satisfies
\begin{align}
\frac{T+1}{T} \le \varphi_k \le \frac{T}{T-1},\quad
\prod_{j=1}^T \varphi_j = \frac{T}{\alpha} = e^{\nu T}.
\label{Eq:restrictphi}
\end{align}
If $\alpha=\alpha_T$ (see Remark~\ref{Rem:alphaT}), then $e^\nu = (T+1)/T$ and the only possible value of 
$\varphi_k$ is $\varphi_k = e^\nu$
for all $k$ because of \eqref{Eq:restrictphi}.\medskip

To simplify \eqref{Eq:CT} for large $n$, we use the following observation.
For $p \in \N$ with $X := 1/T$ and
$C_{k - (T \pm p)} = C_{k \mp p}$, we have
\begin{align*}
\frac{T-p}{\alpha} e^{-\nu(T-p)}C_{k+p}
&= \frac{T-p}{T} e^{\nu p} \frac{C_{k+p}}{C_k} C_k
=C_k \frac{T-p}{T} \prod_{j=1}^p e^{\nu} \frac{C_{k+j}}{C_{k+j-1}}\\
&\le C_k \frac{T-p}{T} \left ( \frac{T}{T-1} \right )^p
= C_k \frac{1-pX}{(1-X)^p},
\end{align*}
and
\begin{align*}
\frac{T+p}{\alpha} e^{-\nu(T+p)}C_{k-p}
&= \frac{T+p}{T} e^{-\nu p} \frac{C_{k-p}}{C_k} C_k
=C_k \frac{T+p}{T} \prod_{j=1}^p \frac{C_{k-j}}{e^\nu C_{k-j+1}}\\
&\le C_k \frac{T+p}{T} \left ( \frac{T}{T+1} \right )^p
= C_k \frac{1+pX}{(1+X)^p}.
\end{align*}
Since \smash{$\sup_{p\ge 2} (1+ pX)/(1+X)^p<1$} for all nonzero $X$ not smaller than $-1$, 
relating $s$ and $p$ by $p=\vert s-T \vert$
we can choose $ \epsilon > 0$ such that  
$C_k - \epsilon > s e^{-\nu s} C_{k-s}/\alpha$ for all $s$  with $\vert s-T\vert  > 1$ and for all $k$.
By \eqref{Eq:Ck}, for this $\epsilon$, there is an integer $m_0$ such that
$C_k - \epsilon < c_{k+nT}  \le C_k$ for all $n \ge m_0$ and for all $k$.
If $n> m_0$, then
$$
c_{k+nT} > C_k - \epsilon > s e^{-\nu s} C_{k-s}/\alpha \ge s e^{-\nu s} c_{k-s+nT}/\alpha,
$$
for all $s$ with $\vert s - T \vert > 1$, which reduces \eqref{Eq:CT} to
\begin{align}
\label{Eq:ckred}
c_{k+(n+1)T} = \max\left \{c_{k+nT}, \tfrac{T+1}{T}e^{-\nu }c_{k-1+nT}, \tfrac{T-1}{T}e^{\nu }c_{k+1+nT} \right \}.
\end{align}
In the following, $n$ is assumed so large that \eqref{Eq:ckred} is valid for all $k$.
Defining \smash{$\delta_{k,n}:=$}$1-c_{k+nT}/C_k$ with the convention $\delta_{k + m T, n} := \delta_{k,n+m}$ for integer $m$ 
and using the definition of $\varphi_k$, we can write
\begin{align*}
\delta_{k,n+1} = \min\left \{ \delta_{k,n} , 1- \tfrac{T+1}{T\varphi_k} (1 - \delta_{k-1,n}),
1 - \tfrac{T-1}{T}\varphi_{k+1} (1 - \delta_{k+1,n} )\right \}.
\end{align*}
As $c_{k+nT} \rightarrow C_k$, we have $\delta_{k,n} \rightarrow 0$ as $n \rightarrow \infty$. Accordingly,
if $(T+1)/(T\varphi_k)<1$, then the term with $(T+1)/(T\varphi_k)$ cannot be a minimum for large $n$.
The same argument is applicable to the term with $(T-1)\varphi_{k+1}/T$.
\medskip


If $\alpha = \alpha_T$, then $\varphi_k = (T+1)/T$ for all $k$ and, accordingly, we have
$\delta_{k,n+1} = \min\{\delta_{k,n},\delta_{k-1,n}\}$ for all $k$ and for all sufficiently large $n$.
If there is $m$ and $k'$ such that $\delta_{k',m}=0$, then $\delta_{k',n}=0$ for all $n \ge m$ and
$\delta_{k'+1,m+1} = 0$, which again gives $\delta_{k'+2,m+2}=0$ and so on. Therefore, we have $\delta_{k,n} =0$ for all $n > m+T$ and 
all~$k$. Hence, to complete the proof for this case, we need to elicit a contradiction if 
$\delta_{k,n}$ is strictly positive for all $n$ and for all $k$.
Since $\delta_{k,n}$ is a nonincreasing sequence of $n$, we have
\begin{align*}
\delta_{k,n+s} &= \min \{ \delta_{k,n+s-1}, \delta_{k-1,n+s-1}\}
= \min\{\delta_{k,n+s-2}, \delta_{k-1,n+s-2}, \delta_{k-1,n+s-1}\}\\
&= \min\{\delta_{k,n+s-2}, \delta_{k-1,n+s-1}\}  
= \min\{\delta_{k,n}, \delta_{k-1,n+s-1}\},
\end{align*}
for all $s \in \N$. Since $\delta_{k-1,n+s-1}$ should approach zero monotonically as $s \rightarrow \infty$,
there should be $s_0$ such that $\delta_{k-1,n+s-1} < \delta_{k,n}$ for all $k$ and for all $s >  s_0$.
Therefore, we get $\delta_{k,n + s+T} = \delta_{k-1,n+s+T-1} = \delta_{k-2,n+s+T-2} = \delta_{k,n+s}$ for all $s>s_0$.
Since $\delta$ cannot increase, we conclude that $\delta_{k,n}$ is a constant
for all sufficiently large $n$. If $\delta_{k,n}$ is strictly positive for all $n$ as assumed, $C_k$ cannot be a limit and
we arrive at a contradiction.
Therefore, there is $t_1$ such that $c_{t+T} = c_t$ for all $t \ge t_1$ in this case.\medskip

If $\alpha\neq \alpha_T$, then there is at least one $\varphi_k$ such that
$(T+1)/T  < \varphi_k$. If $\epsilon$ also satisfies $\epsilon/C_k < 1 - (T+1)/ (T \varphi_k)$, then 
we can write
\begin{align*}
\delta_{k,n+1} = \min\left \{ \delta_{k,n} ,
1 - \tfrac{T-1}{T}\varphi_{k+1} (1 - \delta_{k+1,n} ) \right \},
\end{align*}
for all $n > m_0$.
If $\varphi_{k+1} < T/(T-1)$, then $\delta_{k,n}$ will eventually be smaller than $1-(T-1)\varphi_{k+1}/T$ and 
we have $\delta_{k,n+1} = \delta_{k,n}=0$ for all large $n$.
On the other hand, if $\varphi_{k+1} = T/(T-1) > (T+1)/T$, we have
\begin{align}
\nonumber
\delta_{k,n+1} &= \min \left \{ \delta_{k,n}, \delta_{k+1,n} \right \},\\
\delta_{k+1,n+1} &= \min \left \{ \delta_{k+1,n}, 1 - \tfrac{T-1}{T}\varphi_{k+2} (1 - \delta_{k+2,n} )
\right \}.
\label{Eq:delrec}
\end{align}
Since it is impossible for all $\varphi_k$ to be $T/(T-1)$, there exists $k'$ such that 
$\varphi_{k+i} = T/(T-1)$ for $1 \le i \le k'$ and
$\varphi_{k+k'+1} < T/(T-1)$. Therefore,
$$
\delta_{k+k',n+1} = \delta_{k+k',n},
$$
for all sufficiently large $n$. Once $\delta_{k+k',m} = 0$, then $\delta_{k+i,n} = 0$  for all $0 \le i \le k'$ and for all $n > m+T$ by \eqref{Eq:delrec}. If $\varphi_{k+k'+1} > (T+1)/T$, we can repeat the above procedure.
If $\varphi_{k+k'+1} = (T+1)/T$, we have
$$
\delta_{k+k'+1,n+1} = \min\left \{ \delta_{k+k'+1,n} , \delta_{k+k',n}, 1 - \tfrac{T-1}{T}\varphi_{k+k'+2} 
( 1 - \delta_{k+k'+2,n}) \right \} = 0$$ for all sufficiently large $n$.
Hence, the proof is complete.
\end{proof}

\subsection{Non-uniqueness of periodic solutions}

Proposition~\ref{Prop:periodic} and its proof have shown the general periodic solutions for $t > t_1$ to be of the form
\begin{align*}
\chi_t = \chi_{t_1} \prod_{k=t_1+1}^t \varphi_k,\quad
c_t = c_{t_1} e^{-\nu (t-t_1)} \prod_{k=t_1+1}^t \varphi_k,
\end{align*}
where the $\varphi_k$ satisfy $\varphi_{T+k} = \varphi_k$ and \eqref{Eq:restrictphi}.
Since $(T+1)/T \le e^\nu <T/(T-1)$, setting $\varphi_i = e^\nu$ for all $i$ satisfies \eqref{Eq:restrictphi}, 
which gives the constant sequence $c_{t} = c_{t_1}$. We refer to this solution as the homogeneous state. 
Recall that the homogeneous state is the unique possibility for $\alpha = \alpha_T$, as shown right after \eqref{Eq:restrictphi}.
By constructing an appropriate sequence $(a_n)$, we now show that any set $\{\varphi_k\}$ that satisfies the conditions \eqref{Eq:restrictphi} can give rise to a periodic solution $c_t$. 
Therefore, the periodic solution $c_t$ is not unique and can vary substantially with $(a_n)$ unless $\alpha = \alpha_T$ or $T=1$.
\begin{proposition}
Let
\begin{align}
a_t = \max\left \{\frac{T-i+t-1}{\alpha}
\psi_i \colon 0\le i < T \right \},\quad
\psi_i := \prod_{j=1}^{i} \varphi_j,
\label{Eq:ctex}
\end{align}
with the convention $\prod_{j=1}^0 = 1$.
Then
\begin{align}
\chi_t = \psi_{t+T-1}:= \prod_{j=1}^{t+T-1} \varphi_j,
\label{Eq:indu}
\end{align}
where the $\varphi_j$ are as in \eqref{Eq:restrictphi} with periodicity  $\varphi_{T+j} = \varphi_j$.
\end{proposition}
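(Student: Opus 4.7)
The plan is to prove $\chi_t = \psi_{t+T-1}$ by strong induction on $t\ge 1$, exploiting that, thanks to the product constraint $\prod_{j=1}^T\varphi_j = T/\alpha$ together with periodicity, the candidate indexed by $m=T$ in the maximum defining $\chi_t$ evaluates exactly to $\psi_{t+T-1}$; all that remains is to rule out any larger candidate.

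Under the inductive hypothesis $\chi_s = \psi_{s+T-1}$ for $s<t$, substituting $\chi_i = \psi_{i+T-1}$ into $\tfrac{t-i}{\alpha}\chi_i$ and combining with the $T$ summands of $a_t$ in~\eqref{Eq:ctex}, a reindexing $m = (t+T-1)-j$ collapses every candidate into a single family, yielding
$$\chi_t \;=\; \max_{1 \le m \le t+T-1}\tfrac{m}{\alpha}\,\psi_{t+T-1-m}.$$
The base case $t=1$ reduces to $m\in\{1,\ldots,T\}$ and is exactly the definition of $a_1$. Setting $\Pi_m(i'):=\prod_{j=i'+1}^{i'+m}\varphi_j$, so that $\psi_{t+T-1}/\psi_{t+T-1-m} = \Pi_m(t+T-1-m)$, the target $\chi_t = \psi_{t+T-1}$ becomes equivalent to the key inequality
$$\Pi_m(i') \;\ge\; \tfrac{m}{\alpha}\qquad \text{for all } m\ge 1,\ i'\in\mathbb N_0,$$
with equality at $m=T$ forced by periodicity, since $\Pi_T(i')=T/\alpha$ for every~$i'$.

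To establish the key inequality I would split at $m=T$. For $1\le m \le T$, factor $\Pi_m(i') = (T/\alpha)/\Pi_{T-m}(i'+m)$, use the upper bound $\varphi_j\le T/(T-1)$ to get $\Pi_{T-m}(i'+m) \le (T/(T-1))^{T-m}$, and close via Bernoulli's inequality $(1-1/T)^{T-m}\ge 1-(T-m)/T = m/T$. For $m>T$, write $m=qT+r$ with $q\ge 1$ and $0\le r<T$, factor $\Pi_m(i') = (T/\alpha)^q\Pi_r(i')$ by periodicity, apply the lower bound $\Pi_r \ge ((T+1)/T)^r$ coming from $\varphi_j\ge (T+1)/T$ and, for $q\ge 2$, the upper bound $\alpha^{q-1}\le (T^{T+1}/(T+1)^T)^{q-1}$; after algebraic simplification the remaining inequality reduces to $T((T+1)/T)^{m-T}\ge m$, i.e.\ $(1+1/T)^{m-T}\ge m/T$, again Bernoulli (the $q=1$ sub-case reduces directly to $T\Pi_r\ge T+r$ by the same estimate on $(1+1/T)^r$).

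The main obstacle is that neither one-sided bound on $\varphi_j$ alone suffices across the full admissible $\alpha$-range $((T-1)^T/T^{T-1},\,T^{T+1}/(T+1)^T]$: the upper bound $\varphi_j\le T/(T-1)$ is essential for $m<T$, the lower bound $\varphi_j\ge (T+1)/T$ for $m>T$, and only the product constraint combined with the periodicity $\varphi_{T+j}=\varphi_j$ provides the clean factorisation $\Pi_m=(T/\alpha)^q\Pi_r$ that reduces both sub-cases to Bernoulli's inequality. A secondary subtlety is that the maximum must actually be attained and not merely bounded above; this is immediate since $\Pi_T(i')=T/\alpha$ makes the $m=T$ term equal $\psi_{t+T-1}$ on the nose.
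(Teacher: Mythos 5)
Your proof is correct, and it takes a genuinely different route from the paper. After the common reindexing step, the paper proves that the maximand $\frac{T-i+n}{\alpha}\psi_i$ is unimodal in $i$ — nondecreasing for $i\le n$ (using $\varphi_j\ge (T+1)/T$) and nonincreasing for $n\le i < n+T-1$ (using $\varphi_j\le T/(T-1)$, requiring $T>1$ in that branch) — so that the maximum sits at $i=n$, which gives $\chi_{n+1}=(T/\alpha)\psi_n=\psi_{n+T}$. You instead collapse everything to the single inequality $\Pi_m(i')\ge m/\alpha$ for all $m\ge 1$, split at $m=T$, and close both sides with Bernoulli after factoring out full periods $\Pi_T = T/\alpha$. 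The trade-off: the paper's adjacent-term comparison is entirely local and never needs the bound $\alpha\le T^{T+1}/(T+1)^{T}$ explicitly, whereas your proof invokes it for $q\ge 2$ (though it is in fact implied by $\varphi_j\ge (T+1)/T$ and $\prod_{j=1}^T\varphi_j=T/\alpha$, so nothing extra is assumed). Your version handles $T=1$ uniformly, while the paper must note the decreasing branch is vacuous there; on the other hand the paper's argument reads off the location of the maximum directly, whereas yours certifies the $m=T$ term is maximal without exhibiting the shape of the sequence.
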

\begin{proof}
To find $a_1$, we observe that
for $0 \le i < T-1$
\begin{align*}
\frac{T-i-1}{\alpha} \psi_{i+1} &=
\frac{T-i}{\alpha} \psi_i \frac{T-i-1}{T-i} \varphi_{i+1}\\
&\le \frac{T-i}{\alpha} \psi_i \frac{(T-i-1)T}{(T-i)(T-1)}
=\frac{T-i}{\alpha} \psi_i \frac{T^2-(i+1)T}{T^2-(i+1)+i}
\le \frac{T-i}{\alpha}\psi_i,
\end{align*}
where we have used $\varphi_i \le T/(T-1)$.
Therefore, we get
$$
\chi_1 =a_1 =  \frac{T}{\alpha} = \prod_{j=1}^T \varphi_j
=\psi_T,
$$
which is \eqref{Eq:indu} for $t=1$. Note that this $\chi_1$ is trivially valid for $T=1$.
\medskip

Now assume \eqref{Eq:indu} is valid up to $t = n$.
Then,
$$
\chi_{n+1}
= \max\left \{ \frac{T-i+n}{\alpha}\psi_i : 0 \le i \le n+T-1\right \}.
$$
For $i \le n$, we have
\begin{align*}
\frac{T-i+n}{\alpha}\psi_i &= \frac{T-i+n+1}{\alpha}\psi_{i-1}
\frac{T-i+n}{T-i+n+1} \varphi_i \\
&\ge
\frac{T-i+n+1}{\alpha}\psi_{i-1} \frac{(T-i+n)(T+1)}{(T-i+n+1)T}\\
&=\frac{T-i+n+1}{\alpha}\psi_{i-1} \frac{T^2 + (n+1-i)T + n-i}{T^2 + (n+1-i)T }
\\
&\ge \frac{T-i+n+1}{\alpha}\psi_{i-1},
\end{align*}
and
for $ n+T-1>i \ge n$ and $T>1$, we have
\begin{align*}
\frac{T-i+n}{\alpha}\psi_i &= \frac{T-i+n-1}{\alpha}\psi_{i+1}
\frac{T-i+n}{T-i+n-1} \frac{1}{\varphi_{i+1}} \\
&\ge
\frac{T-i+n-1}{\alpha}\psi_{i+1} \frac{(T-i+n)(T-1)}{(T-i+n-1)T}\\
&=\frac{T-i+n-1}{\alpha}\psi_{i+1} \frac{T^2 + (n-1-i)T + i-n}{T^2 + (n-1-i)T }
\\
&\ge \frac{T-i+n-1}{\alpha}\psi_{i+1}.
\end{align*}
Therefore, we have
$$
\chi_{n+1} = \frac{T}{\alpha} \psi_n = \psi_n \prod_{j=n+1}^{T+n} \varphi_j =  \prod_{j=1}^{T+n} \varphi_j.
$$
Induction completes the proof.
\end{proof}

For a realization of \eqref{Eq:ctex} in the branching process, consider an initial condition such that there are $T$ different mutant classes with fitness $f_i:= f^{\psi_i/\alpha}$ ($0\le i < T$)
and the number $N_i$ of individuals with fitness $f_i$ is 
$$
N_i = \left \lfloor f^{(T-i-1)\psi_i/\alpha} \right \rfloor.
$$
Notice that this initial condition with \smash{$\varphi_j = e^{\nu'}$} together with a shift in time
was used in the proof of Lemma~\ref{Th:mainup}. 
In the limit $f\rightarrow \infty$  as in Sec.~\ref{Sec:sim}, $X(t)$ is well approximated by $f^{\chi_t}$ with $a_t$ in \eqref{Eq:ctex}. 
\medskip

In the original branching process, the sequence $(a_n)$ depends both on the initial condition and the stochastic evolution in the early time regime before the deterministic approximation through the recursion relation (\ref{eq:recursion}) becomes valid.
To see this, we recall from Sec.~\ref{Sec:sim} how the recursion relation arises from the stochastic process. 
Since on survival the total population size as well as the largest fitness increases indefinitely,
there should be a generation $t_0$ such that $X(t_0+1) > K$ for any preassigned $K$.
Let $W_0$ be the largest fitness at generation $t_0$,  
define $Y = X(t_0+1)$ and introduce a shifted time variable $t' = t - t_0$.
If $K$ is extremely large, $X(t')$ can be well approximated as
\begin{align*}
X(1) = Y \approx Y^{\chi_1},\quad W_1 \approx Y^{\chi_1/\alpha},\\
X(2) = Y^{a_2} + Y^{\chi_1/\alpha}+1 \approx Y^{\chi_2},\quad W_2 \approx Y^{\chi_2/\alpha},\\
X(3) = Y^{a_3} + Y^{2\chi_1/\alpha}+Y^{\chi_2/\alpha}+1 \approx Y^{\chi_3},\quad W_3 \approx Y^{\chi_3/\alpha}
\end{align*}
where $Y^{a_n}$ is the population size of all mutant classes that appeared prior to generation $t_0$.  Since $Y^{a_n} \le Y W_0^n$, we naturally have $\lim_{n\rightarrow\infty} a_n e^{-\nu n} = 0$, and $(a_n)$ is a permissible sequence that can be entered
into the recursion relation~(\ref{eq:recursion}).

\subsection{Empirical frequency distribution for large $\alpha$}

Whereas the preceding subsection has shown that the empirical frequency distribution
at long times is generally non-universal, we will now argue that it nevertheless has a  well-defined limit for $\alpha \rightarrow \infty$.
Let us begin with the homogeneous state. In this case,
\begin{align*}
\nonumber
J_i(t)= e^{-\nu(t-i)},\quad  P(t) \equiv \alpha(e^\nu -1), \\
R_i(t)= \frac{(t-i)}{\alpha}J_i(t) -1 
= -\frac{1}{\nu \alpha} J_i(t) \log J_i(t) - 1.
\end{align*}
Since $\nu\alpha \rightarrow 1/e$ as $\alpha \rightarrow \infty$,
the homogeneous state for all sufficiently large $\alpha$ is well described by
\begin{align}
R_i \approx -e J_i \log J_i  - 1,
\label{Eq:Rpre}
\end{align}
and the mean log fitness converges to $P = \frac{1}{e}$.\medskip

Moreover, since 
$$ \frac{T}{T-1} - \frac{T+1}{T} = \frac{1}{(T-1) T} = O(T^{-2})$$
and $T/\alpha \rightarrow e$ 
as $\alpha \rightarrow\infty$, in this limit all periodic solutions that satisfy
the constraints \eqref{Eq:restrictphi} become close to homogeneous, 
$\varphi_i = e^{\nu} +O(\alpha^{-2})$.
Therefore, we conjecture that the empirical distribution on survival has \eqref{Eq:Rpre} as a limit 
distribution for $\alpha \rightarrow \infty$.
As an illustration, in Fig.~\ref{Fig:infa} we compare ~\eqref{Eq:Rpre} to numerical solutions of the recursion relation for $\alpha=3$, 4, 5, 6. The numerical data are hardly
distinguishable from \eqref{Eq:Rpre} already for $\alpha = 5$.

\begin{figure}
\centering
\includegraphics[width=\linewidth]{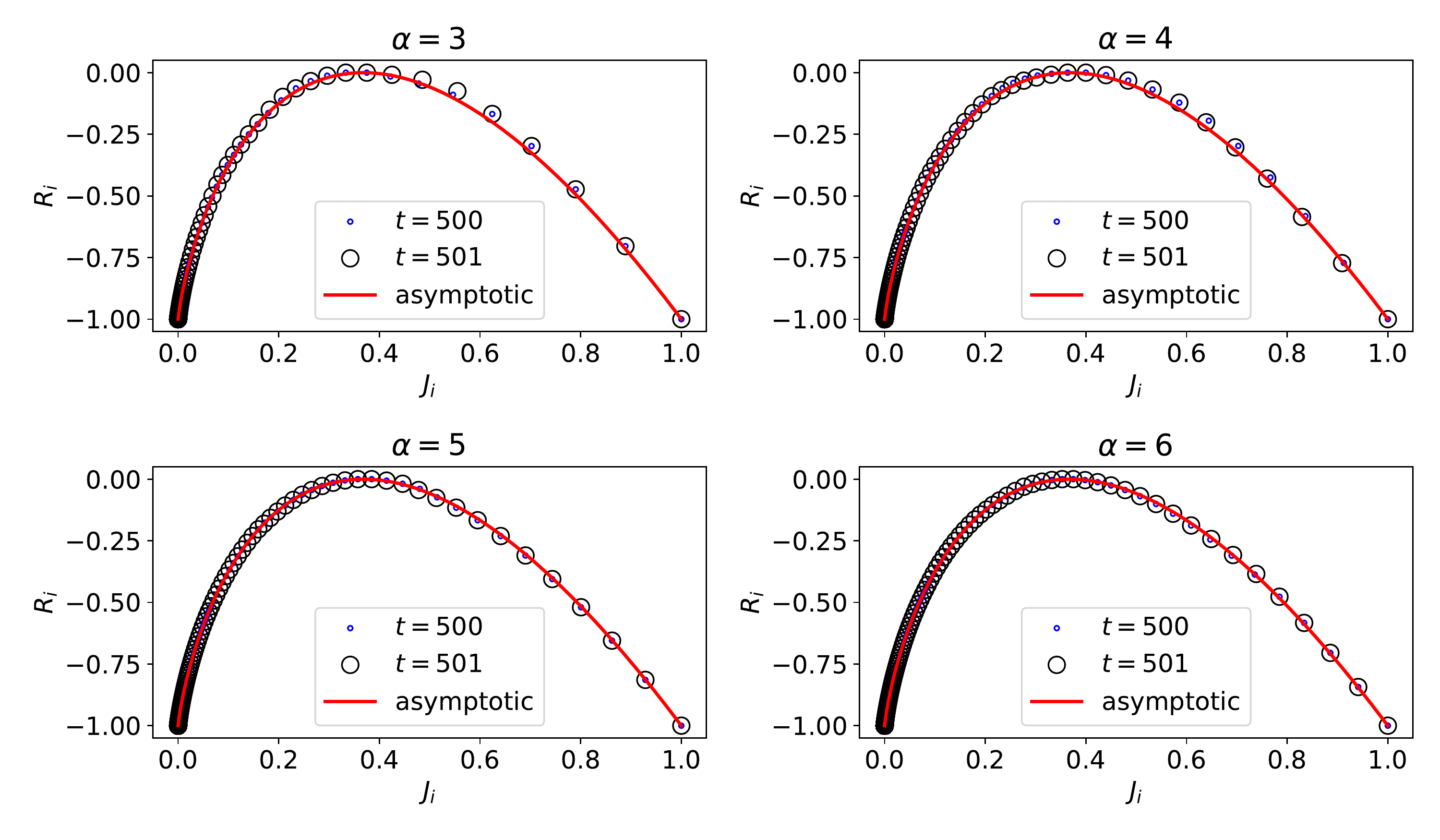}
\caption{\label{Fig:infa} Plot of $R_i$ vs $J_i$  at $t=500$ and 501
for $\alpha=3$, 4, 5, 6. 
For comparison, the asymptotic prediction \eqref{Eq:Rpre} is depicted as a red solid curve. 
For $\alpha=3$ and 4, the changes of the empirical distributions between generations 500 and 501 are still visible, but 
the distributions become indistinguishable from the asymptotic form \eqref{Eq:Rpre} for $\alpha \ge 5$.
}
\end{figure}


\section{\label{Sec:sum}Summary and Discussion}

In this article we have provided a detailed characterization of the superexponential population growth in two closely related stochastic models of evolution. To the best of our knowledge, this is the first rigorous analysis of a branching process with selection and random mutations drawn from an unbounded fitness distribution. A remarkable feature of the models considered here is the emergence of an integer-valued time scale $T$ which depends (discontinuously) on the index $\alpha$ of the underlying Fr\'echet distribution. A partial understanding of the resulting periodic behaviour of the population structure was achieved in a deterministic approximation. Further work on this problem is needed, addressing in particular how the stochastic initial phase of the process determines the non-universal aspects of the asymptotic population distribution.  
\medskip

It is instructive to compare our findings for the branching process to the earlier analysis of a stochastic fixed finite population version of Kingman's model in \cite{Park2008}. In both cases the long-time behavior is dominated by, and can quantitatively understood in terms of extremal mutation events in the past. However, in the fixed finite population model the likelihood of generating mutants that exceed the current population fitness declines with time, and the dynamics reduces to a modified record process, where the takeover of the population by a fit mutant is instantaneous compared to the waiting time for the next fitter mutant. As a consequence, the population at time $t$ is dominated by a mutant that arose at a time of order $t$ in the past. By contrast, in the branching process with Fr\'echet-type distributions, the declining probability of exceeding the current fitness 
is compensated by the rapid growth of the population in such a way that the time lag since the birth of the currently dominant mutant takes on a fixed value $T$.  \medskip

It is reasonable to expect that the growth of the population fitness in the branching process is intermediate between that of the fixed finite population model and the deterministic infinite population model. For Fr\'echet type fitness distributions the deterministic model is ill-defined, but the analysis of the fixed finite population model predicts a polynomial increase of 
the fitness with exponent $1/\alpha$ \cite{Park2008}, which is indeed much slower than the superexponential growth in the branching process. For unbounded Gumbel type distributions the growth law of the fitness is known for infinite as well as for finite populations \cite{Kingman1978,Park2008}. The corresponding behaviour of the branching process will be addressed in future work. 
\bigskip

\bmhead{Acknowledgments}

S-CP acknowledges the support by the National Research Foundation of Korea (NRF) grant funded by the Korea government (MSIT) (Grant No. 2020R1F1A1077065) and by the Catholic University of Korea, research fund 2021.
JK and PM were supported by the German Excellence Initiative through the UoC Forum
``Classical and Quantum Dynamics of Interacting Particle Systems''.

\bibliography{f}

\end{document}